\newcommand{\brackets}[1]{
	 #1
}
\newcommand{\one}[2]{
	\brackets{#1} \ar @{-} [#2]
}
\newcommand{\two}[1]{
	\brackets{#1} \ar @{-} [lu] \ar @{-} [ru]
}
\newcommand{\three}[1]{
	\brackets{#1} \ar @{-} [u] \ar @{-} [rru]
}
\newcommand{\four}[1]{
	\brackets{#1} \ar @{-} [u] \ar @{-} [llu]
}
\newcommand{\five}[1]{
	\brackets{#1} \ar @{-} [u]
}
\def\tl{\triangleleft}
\def\AGL{{\rm AGL}}
\def\mod{{\rm mod\ }}
\def\fix{{\rm fix}}
\def\Aut{{\rm Aut}}
\def\Stab{{\rm Stab}}
\def\la{\langle}
\def\ra{\rangle}
\def\Z{{\mathbb Z}}
\def\soc{{\rm soc}}
\def\PSL{{\rm PSL}}
\def\PGammaL{{\rm P\Gamma L}}
\def\PSL{{\rm PSL}}
\def\PGL{{\rm PGL}}
\def\supp{{\rm supp}}
\def\Sym{{\rm Sym}}
\def\Alt{{\rm Alt}}
\def\Cay{{\rm Cay}}
\newtheorem{thm}{Theorem}[section]
\newtheorem*{thm*}{Theorem}
\newtheorem*{conj*}{Conjecture}
\newtheorem{cor}[thm]{Corollary}
\newtheorem{conj}[thm]{Conjecture}
\newtheorem{lem}[thm]{Lemma}
\theoremstyle{remark}
\newtheorem{rem}[thm]{Remark}
\theoremstyle{definition}
\newtheorem{eg}[thm]{Example}
\newtheorem{defn}[thm]{Definition}
\newtheorem{prob}[thm]{Problem}
\title{Cayley graphs of more than one abelian group}
\author{Ted Dobson}            
\address{IAM, University of Primorska\\
Muzejska trg 2\\
Koper 6000, Slovenia}    
\email{ted.dobson@upr.si}
\author{Joy Morris}            
\address{Department of Mathematics and Computer Science\\
University of Lethbridge\\
Lethbridge, AB. T1K 3M4}    
\email{joy.morris@uleth.ca}
\keywords{Cayley graph, circulant graph, group}
\subjclass[2020]{05C25,20B05}
\begin{document}
\maketitle

\begin{abstract}
We show that for certain integers $n$, the problem of whether or not a Cayley digraph $\Gamma$ of $\Z_n$ is also isomorphic to a Cayley digraph of some other abelian group $G$ of order $n$ reduces to the question of whether or not a natural subgroup of the full automorphism group contains more than one regular abelian group up to isomorphism (as opposed to the full automorphism group).  A necessary and sufficient condition is then given for such circulants to be isomorphic to Cayley digraphs of more than one abelian group, and an easy-to-check necessary condition is provided.
\end{abstract}

\section{Introduction}

It is well known that a Cayley digraph of a group $G$ may also be isomorphic to a Cayley digraph of group $H$ where $G$ and $H$ are not isomorphic.  A natural question is then to determine exactly when a Cayley digraph is isomorphic to a Cayley digraph of a nonisomorphic group.  Perhaps the first work on this problem was by Joseph in 1995 \cite{Joseph1995} where she determined necessary and sufficient conditions for a Cayley digraph of order $p^2$, $p$ a prime, to be isomorphic to a Cayley digraph of both groups of order $p^2$ (see \cite[Lemma 4]{DobsonW2002} for a group theoretic version of this result).  The second author \cite{Morris1999} subsequently extended this result and determined necessary and sufficient conditions for a Cayley digraph of $\Z_{p^k}$, $k\ge 1$ and $p$ an odd prime, to be isomorphic to a Cayley digraph of some other abelian group (see Theorem \ref{pktwogroups} for the statement of this result).  Additionally, she found necessary and sufficient conditions for a Cayley digraph of $\Z_{p^k}$ to be isomorphic to a Cayley digraph of any group of order $p^k$.  The equivalent problem for $p=2$ (when both groups are abelian) was solved by Kov\'acs and Servatius \cite{KovacsS2012}.  Digraphs of order $pq$ that are Cayley graphs of both groups of order $pq$, where $q\mid (p-1)$ and $p,q$ are distinct primes were determined by the first author in \cite[Theorem 3.4]{Dobson2006a}.  Finally, Maru\v si\v c and the second author studied the question of which normal circulant graphs of square-free order are also Cayley graphs of a nonabelian group \cite{MarusicM2005}.

We show in this paper that for some values of $n$, we can reduce the problem of which circulant digraphs of order $n$ are also Cayley digraphs of some other abelian group of order $n$, to the prime-power case previously solved by the second author.  Specifically, let $n = p_1^{a_1}p_2^{a_2}\dotsm p_r^{a_r}$ and let $k = p_1\dotsm p_r$, where each $p_i$ is prime. Then the reduction works if $\gcd(k,\varphi(k)) = 1$.

At first glance, this arithmetic condition may seem odd.  However, this condition is also contained in a well-known result of P\'alfy \cite{Palfy1987} where he characterized all finite groups which are CI-groups with respect to all classes of combinatorial objects.  His result is that the only such groups are groups of order $4$ and cyclic groups of order $n$ satisfying $\gcd(n,\varphi(n)) = 1$.  An equivalent statement of this theorem is that a group $G$ of order $n$ has the property that any subgroup $H\le S_n$ with a regular subgroup isomorphic to $G$ has one conjugacy class of regular subgroups isomorphic to $G$ if and only if $n = 4$ or $\gcd(n,\varphi(n)) = 1$.  See \cite{Dobson2003a,Dobson2014} for some generalizations of P\'alfy's Theorem.  In addition to completely answering the question of which groups are CI-groups with respect to every class of combinatorial objects, P\'alfy's Theorem has also been used to classify various classes of vertex-transitive graphs \cite{Dobson2000a,Dobson2008,DobsonM2011}, and using these results, the first author with Pablo Spiga \cite{DobsonS2017} showed that there are Cayley numbers with arbitrarily many prime divisors, settling an old problem of Praeger and McKay.  Thus P\'alfy's Theorem and its generalizations not only have the obvious applications to the isomorphism problem for Cayley objects, but also to classification problems.

Our approach is to consider the values of $n$ with the following property:  Any subgroup $H\le S_n$ that contains a regular subgroup isomorphic to $\Z_n$ and some other regular abelian group $G$ has a nilpotent subgroup which contains conjugates of every regular abelian subgroup of $H$.  We show in Theorem \ref{maingroup} that $n$ has this property if and only if $\gcd(k,\varphi(k)) = 1$ (with $k$ as defined above).  In a sense, our result shows that for these special values, the question of when a Cayley object of $\Z_n$ is also a Cayley object of some other abelian group reduces to the prime-power case as a finite nilpotent group is the direct product of its Sylow subgroups.  As our result is a characterization of such values of $n$, the full automorphism groups of classes of combinatorial objects may have the above property for all values of $n$, but the structure of the combinatorial object will have to be used to prove this - permutation group theoretic techniques will not suffice.

Next, we apply this result when the combinatorial object are digraphs - the only combinatorial objects for which the prime-power case has been solved.  We show in Theorem \ref{main} that if all automorphism groups of circulant digraphs have the permutation group theoretic property in the previous paragraph (which is the case if $\gcd(k,\varphi(k)) = 1$), then the question of when a circulant digraph is also a Cayley digraph of some other abelian groups reduces to the prime-power case.

Theorem \ref{maingroup} also turns out to also be a generalization of  a restricted form of P\'alfy's Theorem.  This ultimately follows as the only regular abelian subgroup of $\Z_n$ when $n$ is square-free is $\Z_n$, and a transitive nilpotent group of square-free order is necessarily $\Z_n$.  For a complete explanation, see Remark \ref{Palfy gen}. This gives that P\'alfy's Theorem and its generalizations also have applications to a third question, the question of when a combinatorial object is a Cayley object of more than one group.

To summarize, we determine an algebraic condition which reduces the problem of when a circulant digraph is also a Cayley digraph of some other abelian group to the prime-power case.  We determine for which values of $n$ this condition holds for {\it all} permutation groups of degree $n$. We then combine these results to determine necessary and sufficient conditions for a circulant digraph to also be a Cayley digraph of some other abelian group for those values of $n$.

In the remainder of this section, we state the second author's result (Theorem \ref{pktwogroups}), first providing the necessary definitions for that statement. In Section 2, we provide the necessary group theoretic results to prove our main theorem. In Section 3, we provide the necessary graph theoretic results to prove our main result, which is Corollary \ref{mainresult}.

\begin{defn}
Let $G$ be a group and $S\subset G$. Define a {\bf Cayley digraph of $G$}, denoted $\Cay(G,S)$, to be the digraph with $V(\Cay(G,S)) = G$ and $A(\Cay(G,S)) = \{(g,gs):g\in G, s\in S\}$.  We call $S$ the {\bf connection set} of $\Cay(G,S)$.
\end{defn}

\begin{defn}
Let $\Gamma_1$ and $\Gamma_2$ be digraphs.  The {\bf wreath product of $\Gamma_1$ and $\Gamma_2$}, denoted $\Gamma_1\wr\Gamma_2$, is the digraph with vertex set $V(\Gamma_1)\times V(\Gamma_2)$ and arcs $((u,v)(u,v'))$ for $u\in V(\Gamma_1)$ and $(v,v')\in A(\Gamma_2)$
or $((u,v),(u',v'))$ where $(u,u')\in A(\Gamma_1)$ and $v,v'\in V(\Gamma_2)$.
\end{defn}

For any terms from permutation group theory that are not defined in this paper, see \cite{DixonM1996}.  For a set $X$ we denote the symmetric and alternating groups on $X$ by $\Sym(X)$ and $\Alt(X)$.  If $\vert X\vert = n$ and the set is unimportant, we write $\Sym(n)$ and $\Alt(n)$.

\begin{defn}
Let $X$ and $Y$ be sets, $G\le \Sym(X)$, and $H\le Sym(Y)$.  Define the {\bf wreath product of $G$ and $H$}, denoted $G\wr H$, to be the set of all permutations of $X\times Y$ of the form $(x,y)\mapsto (g(x),h_x(y))$, where for each $x\in X$, $h_x$ is an element of $H$ that acts on $Y$, but for different $x \in X$, the choice of $h_x$ is independent.
\end{defn}

We caution the reader that these definitions of wreath products are not completely standard, in that some mathematicians use $H\wr G$ for what we have defined as $G\wr H$, and similarly use $\Gamma_2\wr \Gamma_1$ for our $\Gamma_1 \wr \Gamma_2$. Both orderings appear frequently in the literature.

We will often times consider wreath products of multiple digraphs or groups, and sometimes the specific digraphs or groups are unimportant.  In this circumstance, rather than write out and define the digraphs or groups, we will just say that a digraph $\Gamma$ or group $G$ is a {\bf multiwreath product}. Formally, a digraph $\Gamma$ is a multiwreath product if there exist nontrivial digraphs $\Gamma_1,\dotsc,\Gamma_r$ such that $\Gamma = \Gamma_1\wr\Gamma_2\wr\dotsb\wr\Gamma_r$, and a group $G$ is a multiwreath product if there exist nontrivial groups $G_1,\dotsc, G_r$ such that $G = G_1\wr G_2\wr\dotsb\wr G_r$.

Following \cite{Morris1999}, define a partial order on the set of abelian groups of order $p^n$ as follows:  We say $G\preceq _{p} H$  if there is a chain $H_1 < H_2 <\dotsb < H_m = H$ of subgroups of $H$ such that $H_1,H_2/H_1,\dotsc, H_m/H_{m-1}$
are all cyclic, and
$$G\cong H_1 \times \frac{H_2}{H_1}\times \dotsm \times \frac{H_m}{H_{m-1}}.$$
There is an equivalent definition for this partial order. Take the natural partial order on partitions of a fixed integer $n$, so for partitions $\mu: n = i_1+ \dotsb + i_m$ and $\nu: n =j_1+\dotsb+j_{m_0}$ of $n$, we say $\mu \le \nu$ if $\nu$ can be obtained from $\mu$ after possible rearrangement, by grouping some summands.
Now, $G \preceq_p H$ precisely if
$G\cong\Z_{p^{i_1}}\times\Z_{p^{i_2}}\times\dotsm\times\Z_{p^{i_m}}$ and $H\cong \Z_{p^{j_1}}\times\Z_{p^{j_2}}\times\dotsm\times\Z_{p^{j_{m_0}}}$ where $\mu \le \nu$.  In Figure \ref{poset} this partial order is depicted for abelian groups of order $p^5$.

\begin{figure}

\[
\xymatrix{
                              &    \Z_{p^5}                &                                 \\
\one{\Z_{p^4}\times\Z_p}{ur}  &                            & \one{\Z_{p^2}\times\Z_{p^3}}{ul}    \\
\three{\Z_{p^2}^2\times\Z_p}  &                            &  \four{\Z_{p^3}\times\Z_p^2}      \\
                              & \two{\Z_{p^2}\times\Z_p^3} &                                 \\
                              & \five{\Z_p^5}              &                                 \\
}
\]

\caption{The partial order for groups of order $p^5$}

\label{poset}

\end{figure}
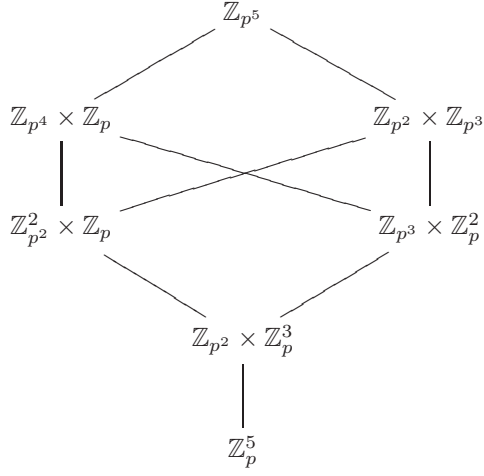

The following result was proven in \cite{Morris1999} (see also \cite{Morris1996}) in the case where $p$ is an odd prime and in \cite{KovacsS2012} when $p = 2$.

\begin{thm}\label{pktwogroups}
Let $\Gamma = \Cay(G,S)$ be a Cayley digraph on an abelian group $G$ of order $p^k$, where $p$ is prime. Then the following are equivalent:
\begin{enumerate}
\item The digraph $\Gamma$ is isomorphic to a Cayley digraph on both $\Z_{p^k}$ and $H$, where
$H$ is an abelian group with $\vert H\vert = p^k$, say $H = \Z_{p^{k_1}}\times\Z_{p^{k_2}}\times\dotsm\times\Z_{p^{k_{m_0}}}$, where $k_1 + \dotsb + k_{m_0} = k$.
\item There exist a chain of subgroups $G_1\le\dotsb\le G_{m-1}$ in $G$ such that
\begin{enumerate}
\item $G_1,G_2/G_1,\dotsc,G/G_{m-1}$ are cyclic groups;
\item $G_1\times G_2/G_1\times\dotsm\times G/G_{m-1} \preceq_p H$;
\item For all $s\in S\backslash G_i$, we have $sG_i\subseteq S$, for $i = 1,\dotsc,m - 1$. (That is, $S\backslash G_i$ is a union of cosets of $G_i$.)
\end{enumerate}
\item There exist Cayley digraphs $U_1,\dotsc,U_m$ on cyclic $p$-groups $H_1,\dotsc,H_m$ such
that $H_1\times\dotsm\times H_m \preceq_p H$ and $\Gamma\cong U_m\wr\dotsb\wr U_1$.
\end{enumerate}
Furthermore, any of these implies:
\begin{enumerate}
\item[4.]$\Gamma$ is isomorphic to Cayley digraphs on every abelian group of order $p^k$ that
is greater than $H$ in the partial order.
\end{enumerate}
\end{thm}

\section{Group Theoretic Results}

In the section we collect all permutation group theoretic results that we will need for our main result.

\begin{thm}[Theorem 3 of \cite{Jones2002}, or Corollary 1.2 of \cite{Li2003}]\label{dtcyclic}
A primitive permutation group $K$ acting on $\Omega$ of finite degree n has a cyclic regular
subgroup if and only if one of the following holds:
\begin{enumerate}
\item $\Z_p\le K\le \AGL(1,p)$, where $n=p$ is prime;
\item $K = \Sym(n)$ for some $n$, or $K = \Alt(n)$ for some odd $n$;
\item $\PGL(d,q)\le K\le \PGammaL(d,q)$ where $n = (q^d-1)/(q-1)$ for some $d\ge 2$;
\item $K = \PSL(2,11)$, $M_{11}$ or $M_{23}$ where $n = 11$, $11$ or $23$ respectively.
\end{enumerate}
\end{thm}

When we refer to any of the groups listed in the above theorem, we will be considering it not as an abstract group, but as a permutation group endowed with its natural action.

If $G$ has a block system ${\mathcal B}$ with blocks of minimal size, then the action of the set-wise stabilizer $H$ in $G$ of the block $B\in{\mathcal B}$ is primitive.  We will only be concerned with the case when $G$ contains a regular cyclic subgroup, and it is not hard to show that the action of $H$ on $B$ is one of the groups in Theorem \ref{dtcyclic}.  We now consider conjugation results concerning the groups in Theorem \ref{dtcyclic} that we will require later.

`The following result can be deduced from Theorem 1.1 and Corollary 1.2 of \cite{Li2003}.

\begin{lem}\label{Singer}
If $K$ satisfies $\PGL(d,q)\le K\le\PGammaL(d,q)$, then every regular abelian subgroup of $K$ is cyclic. Furthermore, any such subgroup is a Singer subgroup (and so any two are conjugate in $\PGL(d,q)$) unless $d = 2$ and $q = 8$, in which case $n = 9$.
\end{lem}

The conjugacy of regular cyclic subgroups is also noted in Corollary 2 of \cite{Jones2002}, but the fact that all regular abelian subgroups are cyclic is proved in the Li paper.

The following result is well-known.

\begin{lem}\label{altcon}
If two elements of $\Alt(n)$ are conjugate in $\Sym(n)$ but not in $\Alt(n)$, then their cycle structures are the same, they have no cycle of even length, and the lengths of all of their odd cycles are distinct.
\end{lem}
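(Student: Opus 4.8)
The plan is to reduce the statement to the classical question of when a $\Sym(n)$-conjugacy class of an even permutation splits into two classes in $\Alt(n)$. Since two permutations are conjugate in $\Sym(n)$ precisely when they have the same cycle structure, two elements $\alpha,\beta\in\Alt(n)$ are conjugate in $\Sym(n)$ but not in $\Alt(n)$ exactly when they share a cycle structure whose $\Sym(n)$-class meets $\Alt(n)$ in more than one $\Alt(n)$-class. So it suffices to characterise, in terms of cycle structure, those $\sigma\in\Alt(n)$ whose $\Sym(n)$-class splits.

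First I would record the standard index-$2$ dichotomy. Because $\Alt(n)$ is normal of index $2$ in $\Sym(n)$, for any $\sigma\in\Alt(n)$ the $\Sym(n)$-class $O$ of $\sigma$ is a union of $\Alt(n)$-classes, and comparing orders via the orbit--stabiliser theorem shows that the centraliser $C_{\Alt(n)}(\sigma)=C_{\Sym(n)}(\sigma)\cap\Alt(n)$ has index either $1$ or $2$ in $C_{\Sym(n)}(\sigma)$. In the first case $C_{\Sym(n)}(\sigma)\le\Alt(n)$, the $\Alt(n)$-class of $\sigma$ has half the size of $O$, and $O$ splits into exactly two $\Alt(n)$-classes; in the second case $C_{\Sym(n)}(\sigma)\not\le\Alt(n)$, the two potential classes have equal size and $O$ is a single $\Alt(n)$-class. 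Thus $O$ splits if and only if every permutation commuting with $\sigma$ is even, i.e. $C_{\Sym(n)}(\sigma)\le\Alt(n)$.

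The remaining step is to translate the condition $C_{\Sym(n)}(\sigma)\le\Alt(n)$ into a statement about cycle lengths. Writing $c_k$ for the number of $k$-cycles of $\sigma$, the centraliser decomposes as the direct product $\prod_k\bigl(\Z_k\wr\Sym(c_k)\bigr)$, generated by the rotations of individual cycles and by the permutations interchanging two cycles of equal length. A rotation of a single $k$-cycle is itself a $k$-cycle, of sign $(-1)^{k-1}$, hence odd exactly when $k$ is even; an interchange of two disjoint $k$-cycles is a product of $k$ transpositions, of sign $(-1)^{k}$, hence odd exactly when $k$ is odd (and such an element exists only when $c_k\ge 2$). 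Since the sign is a homomorphism, $C_{\Sym(n)}(\sigma)$ contains an odd permutation if and only if one of these generators is odd, that is, if and only if $\sigma$ has a cycle of even length or has two cycles of the same length. Negating, $C_{\Sym(n)}(\sigma)\le\Alt(n)$ exactly when all cycles of $\sigma$ have odd length and these lengths are pairwise distinct, which combined with the dichotomy above yields the lemma.

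The main obstacle is the sign bookkeeping in the last step: one must identify the correct generating set of the centraliser and compute the parities of a cycle-rotation and of a cycle-interchange, taking care that an interchange requires at least two cycles of the given length. Everything else is the routine index-$2$ argument.
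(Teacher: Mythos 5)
Your proof is correct; note, however, that the paper itself offers no proof of this lemma at all --- it is introduced with ``The following result is well-known'' and simply stated --- so there is no internal argument to compare against. Your argument is the standard one for this classical splitting criterion: the index-$2$ dichotomy (the $\Sym(n)$-class of $\sigma\in\Alt(n)$ splits into two $\Alt(n)$-classes precisely when $C_{\Sym(n)}(\sigma)\le\Alt(n)$, by the orbit--stabiliser count), followed by the decomposition $C_{\Sym(n)}(\sigma)\cong\prod_k\bigl(\Z_k\wr\Sym(c_k)\bigr)$ and the parity computation for the two kinds of generators (a rotation of a $k$-cycle has sign $(-1)^{k-1}$, an interchange of two $k$-cycles has sign $(-1)^k$), and both computations are right. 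One caveat worth recording: read literally, the ``if'' direction of the lemma is false for an arbitrary pair of elements (take the two elements equal; they satisfy all three cycle conditions yet are trivially conjugate in $\Alt(n)$). What is really meant, and what your argument actually establishes, is the splitting statement: for such a cycle type the $\Sym(n)$-class breaks into two $\Alt(n)$-classes. Your reduction step inherits exactly the same imprecision as the lemma's wording, so this is not a defect of your proof relative to the statement. Importantly, the direction the paper actually invokes (in Lemma 2.6: two elements with $n/p\ge 2$ cycles of length $p$ that are conjugate in $\Sym(n)$ must already be conjugate in $\Alt(n)$) is the ``only if'' direction, and that direction follows correctly and unconditionally from your centraliser argument, since a repeated cycle length forces an odd element of $C_{\Sym(n)}(\sigma)$ and hence a non-split class.
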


\begin{lem}\label{easyfact}
Let $K,K' \le G$ be conjugate in $G$.  If $N \tl G$ and $KN= G$, then there exists $n\in N$ with $n^{-1}Kn = K'$.

In particular, any two regular cyclic subgroups of $\PGL(d,q)$ are conjugate by an element of $\PSL(d,q)$.  Also, if $n$ is even, then any two regular cyclic subgroups of $\Sym(n)$ are conjugate by an element of $\Alt(n)$.
\end{lem}

\begin{proof}
Let $g\in G$ such that $g^{-1}Kg = K'$.  As $KN = G$ there exists $k\in K$, $n\in N$ such that $kn = g$.  Now $n^{-1}Kn=g^{-1}kKk^{-1}g = g^{-1}Kg=K'.$

By \cite[Corollary 2]{Jones2002} or \cite[Corollary 1.2]{Li2003}, there is one conjugacy class of regular cyclic subgroups of $\PGL(d,q)$, and clearly there is always one conjugacy class of regular cyclic subgroups in $\Sym(n)$.  If $K$ is a regular cyclic subgroup of $\PGL(d,q)$, then $K\PSL(d,q) = \PGL(d,q)$  by \cite[Lemma 2.3]{Li2003}. Similarly, if $K$ is a regular cyclic subgroup of $\Sym(n)$ where $n$ is even, then $K\Alt(n) = \Sym(n)$.  The result follows by the first paragraph of this proof, with $\PSL(d,q)$ or $\Alt(n)$ taking the role of $N$.
\end{proof}

The concept of $\Omega$-step imprimitivity will be important in this paper. Intuitively, on a set of cardinality $n$, the action of a transitive group is $\Omega$-step imprimitive if there is a sequence of nested block systems that is as long as possible (given $n$). The terms ``nested" and ``as long as possible" may not be clear, so we provide formal definitions below, including an explicit formula for $\Omega=\Omega(n)$.

\begin{defn}
Let $G$ be a transitive permutation group.  Let $Y$ be the
set of all block systems of $G$.  Define a partial order
on $Y$ by $\mathcal{B}\le\mathcal{C}$ if and only if every block of
$\mathcal{C}$ is a union of blocks of $\mathcal{B}$. We say that a strictly increasing sequence of $m+1$ block systems under this partial order is an {\bf \mathversion{bold}$m$-step imprimitivity sequence} admitted by $G$.
\end{defn}

An $m$-step imprimitivity sequence is what we referred to in our intuitive description as a ``nested" sequence.

\begin{defn}
Let $G$ be a transitive group of degree $n$.  Let $n = \Pi_{i=1}^rp_i^{a_i}$ be the prime factorization of $n$ and let ${\mathversion{bold}\Omega = \Omega(n)=\sum_{i=1}^ra_i}$. Then $G$ is {\mathversion{bold}\bf $\Omega$-step imprimitive} if it admits an $\Omega$-step imprimitivity sequence.

A block system $\mathcal{B}$ will
be said to be {\bf normal} if the elements of $\mathcal{B}$ are the orbits
of a normal subgroup.  We will say that $G$ is {\bf\mathversion{bold}normally
$\Omega$-step imprimitive} if $G$ is $\Omega$-step imprimitive with a sequence in which each block system is
normal.
\end{defn}

Let $\mathcal{B}_0 < \dotsb < \mathcal{B}_\Omega$ be an $\Omega$-step imprimitivity sequence of $G$, where $G$ is acting on $X$.  Then $\mathcal{B}_0$ consists of singleton sets, $\mathcal{B}_\Omega=\{X\}$, and if $B_i\in\mathcal{B}_i$ and $B_{i+1}\in\mathcal{B}_{i+1}$, then $\vert B_{i+1}\vert/\vert B_i\vert$ is a prime. Thus it is not possible to have a $k$-step imprimitivity sequence for any $k>\Omega$, satisfying our intuitive description of the system as being ``as long as possible".

Recall that we would like to characterise digraphs $\Gamma$ with $G, G' \le \Aut(\Gamma)$, where $G$ and $G'$ are regular (and nonisomorphic), and $G$ is cyclic. Our method will be to find a subgroup $N$ of $\Aut(\Gamma)$ that is normally $\Omega(n)$-step imprimitive. If we can then find regular subgroups $H,H'\le N$ with $H \cong G$ and $H'\cong G'$, then we will be able to use the fact that $H$ and $H'$ also admit the many block systems of $N$ to determine a lot about the structure of $\Gamma$.

In fact, in Lemma~\ref{tool1}, we will show that there is some conjugate $\delta^{-1}G'\delta\le \Aut(\Gamma)$ of $G'$, such that $K=\langle G, \delta^{-1}G'\delta\rangle$ is normally $\Omega(n)$-step imprimitive. Then in Theorem~\ref{maingroup}, we show that if we assume a numerical condition on $n$, there is a nilpotent group $N \le K$ that contains subgroups isomorphic to both $G$ and $G'$. Clearly $N$ is still $\Omega(n)$-step imprimitive since it must admit all of the blocks that $K$ admits.

The next two lemmas, the intervening corollary, and the definitions surrounding them are required for the proof of Lemma~\ref{tool1}, which is in turn used in the proof of Theorem~\ref{maingroup}.

\begin{defn}
Let $X$ be a set and $G\le Sym(X)$ be transitive with and $\mathcal{B}$ a block system of $G$.  For $g\in G$ we denote by $g/\mathcal{B}$ the permutation of $\mathcal{B}$ induced by $g$, and $G/\mathcal{B} = \{g/\mathcal{B}:g\in G\}$.

By {\mathversion{bold}$\fix_G(\mathcal{B})$} we mean the subgroup of $G$ which fixes each block of $\mathcal{B}$ set-wise.  That is, $$\fix_G(\mathcal{B}) = \{g\in G:g(B) = B{\rm\ for\ all\ }B\in\mathcal{B}\}.$$ This can also be thought of as the kernel of the projection from $G$ to $G/\mathcal B$.

For $B\in\mathcal{B}$, we denote the set-wise stabilizer of the block $B$ by {\mathversion{bold}$\Stab_G(B)$}.  That is, $$\Stab_G(B) = \{g\in G:g(B) = B\}.$$

The {\bf\mathversion{bold}support of $G$}, denoted $\supp(G)$, is the set of all $x\in X$ that are acted on nontrivially by some $g\in G$. That is, $$\supp(G) = \{x\in X:\text{there exists }g\in G \text{ such that }g(x) \neq x\}.$$
\end{defn}

\begin{lem}[Lemma 2.2 of \cite{Dobson2009}]\label{simplegroupblock}
Let $H \le \Sym(n)$ be transitive such that $H$ admits a complete block system $\mathcal B$. If $T=(\soc(\fix_H(\mathcal B)))^B$ is a transitive nonabelian simple group where $B \in \mathcal B$, then $\mathcal C=\{\supp(L): L \text{ is a minimal normal subgroup of } \fix_H(\mathcal B)\}$ is a complete block system of $H$, $ \mathcal B \le \mathcal C$, and $\soc(\fix_H(\mathcal B))$ is a direct product of simple groups isomorphic to $T$.
\end{lem}

Some of the arguments in the following corollary are also used in the proof of the above lemma, but since they are not included in the final statement, we repeat them for clarity.

\begin{cor}\label{simplegroupblock-cor}
Let $H$, $\mathcal B$, $T$, and $\mathcal C$ be as in the statement of Lemma~\ref{simplegroupblock}. If the blocks of $\mathcal C$ are $C_1, \ldots, C_k$, then we can write $\soc(\fix_H(\mathcal B)) =T_1 \times \cdots \times T_k$ where the support of $T_i$ is $C_i$ for every $1 \le i \le k$.

Let $C \in \mathcal C$ and let $B, B'$ be two blocks of $\mathcal B$ that lie inside $C$. If $T$ also has trivial centralizer in $\fix_H(\mathcal B)^B$ then there is no element $\gamma\in \fix_H(\mathcal B)$ such that $\gamma^B=1$ but $\gamma^{B'}\neq 1$.

In particular, this applies if $T$ is primitive on $B$.
\end{cor}

\begin{proof}
First notice that as Lemma~\ref{simplegroupblock} states, $\soc(\fix_H(\mathcal B))$ is a direct product of isomorphic finite simple groups isomorphic to $T$, so the kernel of any homomorphism from $\soc(\fix_H(\mathcal B))$ onto $T$ is the same as the kernel of some projection onto a single factor.

Thus, each block of $\mathcal B$ is in the support of a unique direct factor of $\soc(\fix_H(\mathcal B))$, so there is a well-defined map from $\mathcal B$ to the direct factors of $\soc(\fix_H(\mathcal B))$ determined by the direct factor that includes $\mathcal B$ in its support.

Now, taking any $h \in \fix_H(\mathcal B)$, since $h$ fixes every block in the support of any direct factor $T_1$ of  $\soc(\fix_H(\mathcal B))$, the map we've just mentioned means that $T_1$ is normalized by $h$. Thus, the direct factors in $\soc(\fix_H(\mathcal B))$, which are the minimal normal subgroups of $\soc(\fix_H(\mathcal B))$, are also normal subgroups (and therefore minimal normal subgroups) in $\fix_H(\mathcal B)$. In other words, there is a one-to-one correspondence between the blocks of $\mathcal C$ and the direct factors of $\soc(\fix_H(\mathcal B))$, establishing the first claim of this corollary.

Towards a contradiction, suppose that there exists $\gamma \in \fix_H(\mathcal B)$ and $B,B'\in{\mathcal B}$, $B\cup B'\subseteq C\in{\mathcal C}$,  such that $\gamma^B=1$ but $\gamma^{B'}\neq 1$. Let $N=\langle \gamma \rangle^{\fix_H(\mathcal B)}$ be the normal closure of $\langle \gamma\rangle$ in $\fix_H(\mathcal B)$, and notice that since $\gamma^B=1$ we also have $N^B=1$.

Let $T_{B'}=(\soc(\fix_H(\mathcal B)))^{B'}$. By hypothesis, $T_{B'}$ has trivial centralizer in $\fix_H(\mathcal B)^{B'}$, so there is some element $t' \in T_{B'}$ such that $[t',\gamma]$ is nontrivial on $B'$. Let $M$ be the direct factor of $\soc(\fix_H(\mathcal B))$ with $\supp(M)=C$.  As $C = \supp(M)$ and $B\subseteq C$, $M^B\not = 1$.  Then there exists $t \in M$ such that $t^B=t'$. Now, $[t,\gamma] \in N$, since $\gamma \in N$, $N \trianglelefteq \fix_H(\mathcal B)$, and $t \in \fix_H(\mathcal B)$. Also, $[t,\gamma] \in M$, since $t\in M$, $M \trianglelefteq \fix_H(\mathcal B)$, and $\gamma \in \fix_H(\mathcal B)$. So $N \cap M$ is a nontrivial normal subgroup of $M$. Since $M$ is simple, $N\ge M$.  But then $N^B = 1$ and so $M^B = 1$, a contradiction.

We complete this proof by showing that if $T$ is primitive then its centralizer in $\fix_H(\mathcal B)^B$ is trivial, so this corollary applies.  Note that a point-stabilizer in $T$ cannot be normal in $T$ because a nontrivial normal subgroup of a primitive group must be transitive.  Also, a point-stabilizer in a primitive group is maximal, so it must be self-normalizing. The result now follows by \cite[Theorem 4.2A (iv)]{DixonM1996}.
\end{proof}

\begin{lem}\label{doubly-transitive} Let $G, G'$ be regular abelian subgroups of a primitive group $K$  of degree $n$,  with $G$ cyclic.  Let $p$ be any prime divisor of $n$, let $G_p$ be the unique subgroup of $G$ of order $p$, and let $G'_p$ be any subgroup of $G'$ of order $p$. Then there exists $\delta\in \soc(K)$ such that $ G_p=\delta^{-1}G'_p\delta$.
\end{lem}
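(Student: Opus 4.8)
The plan is to apply the classification in Theorem~\ref{dtcyclic} to the primitive group $K$, which is justified because $K$ contains the cyclic regular subgroup $G$. Throughout, $G_p$ denotes the unique subgroup of order $p$ of the cyclic group $G$, and the first observation I would record is that both $G_p$ and $G'_p$ are generated by \emph{semiregular} elements of order $p$, since every nontrivial element of a regular group is fixed-point-free. Hence each generator has cycle type consisting of $n/p$ cycles of length $p$. I would then treat the four cases of Theorem~\ref{dtcyclic} in turn, the key point in each being to identify $\soc(K)$ and to conjugate $G'_p$ onto $G_p$ within it.

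When $n$ is prime --- namely $K\le\AGL(1,p)$, and $K\in\{\PSL(2,11),M_{11},M_{23}\}$ --- we have $p=n$, so $G_p=G$ and $G'_p=G'$ are themselves regular of order $p$, i.e.\ Sylow $p$-subgroups of $K$ (one checks that $p$ exactly divides $|K|$ in each instance). For $\AGL(1,p)$ the Sylow $p$-subgroup is normal, so $G=G'=\soc(K)$ and $\delta=1$ works; for the three almost simple groups $\soc(K)=K$, so Sylow's theorem produces an element of $K=\soc(K)$ conjugating $G'$ to $G$.

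For $K=\Sym(n)$ or $\Alt(n)$ we have $\soc(K)=\Alt(n)$ (for $n\ge 5$), and the common cycle type shows the generators of $G_p$ and $G'_p$ are conjugate in $\Sym(n)$. To push the conjugating element into $\Alt(n)$ I would invoke Lemma~\ref{altcon}: when $p=2$ or when $n/p\ge 2$ the cycle type either contains an even cycle or has a repeated odd length, so the two generators are already conjugate in $\Alt(n)$, giving the required $\delta\in\soc(K)$. The only remaining subcase is $n=p$ (a single $p$-cycle), where the $\Sym(p)$-class genuinely splits in $\Alt(p)$; here I would use that only the \emph{subgroups} need be conjugate, together with the fact that the normalizer $\AGL(1,p)$ of $G_p$ contains an odd permutation (multiplication by a primitive root is odd for $p\ge 5$), so both $\Alt(p)$-classes of $p$-cycles are met by generators of $G_p$ and some power of a generator is $\Alt(p)$-conjugate to the generator of $G'_p$. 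The tiny degrees $p\le 3$, and the exceptional degree $4$ where $\soc(\Sym(4))=V_4$, I would dispatch by hand.

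The substantive case, which I expect to be the main obstacle, is $\PGL(d,q)\le K\le\PGammaL(d,q)$ with $\soc(K)=\PSL(d,q)$, where $G$ is a Singer cycle. Corollary~\ref{contool} settles this immediately when $G'$ is cyclic: an element of $\PSL(d,q)$ conjugating the Singer cycle $G'$ onto $G$ must carry $G'_p$ onto $G_p$. The difficulty is that $G'$ is only assumed abelian. My plan is to exploit regularity: since $G'$ is transitive it stabilizes no proper nonempty projective subspace, so its full preimage $\widehat{G'}$ in $\GL(d,q)$ acts irreducibly on $\mathbb{F}_q^d$. If $\widehat{G'}$ is abelian, Schur's lemma identifies it with a subgroup of a field $\mathbb{F}_{q^d}^*$, i.e.\ with a subgroup of a Singer cycle, so $G'$ is cyclic and the previous sentence applies. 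The genuinely delicate point is that $G'=\widehat{G'}/Z$ abelian only forces $[\widehat{G'},\widehat{G'}]$ into the scalars $Z$, so $\widehat{G'}$ may be non-abelian; here I would analyze the eigenvalues of a generator of $G'_p$ directly, aiming to show that its module is isotypic and hence that the element is $\PSL(d,q)$-conjugate to the order-$p$ element of a Singer cycle. The exceptional pair $(d,q)=(2,8)$ with $n=9$, where regular cyclic subgroups need not all be Singer, must be handled on its own. This eigenvalue analysis of semiregular order-$p$ elements is where I expect the real effort to lie.
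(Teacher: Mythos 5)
Your skeleton is the same as the paper's (split into the cases of Theorem~\ref{dtcyclic}; Sylow conjugacy at prime degree; cycle types and Lemma~\ref{altcon} for $\Alt(n)$ and $\Sym(n)$; Corollary~\ref{contool} in the projective case), and the parts you actually complete are correct. The genuine gap is exactly where you predict ``the real effort'' will lie, and you do not close it: for $\PGL(d,q)\le K\le\PGammaL(d,q)$ you never prove that a regular \emph{abelian} subgroup $G'$ must be cyclic (hence Singer), and the route you sketch would fail as stated. Passing to a ``full preimage $\widehat{G'}$ in $\GL(d,q)$'' presupposes $G'\le\PGL(d,q)$, but elements of $K$ may be strictly semilinear and then have no preimage in $\GL(d,q)$ at all, so neither the preimage construction nor Schur's lemma applies to $G'$; this is not a peripheral worry, since the genuine exception $(d,q)=(2,8)$ arises precisely from regular subgroups containing semilinear elements. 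The paper does not attempt a first-principles argument here: it leans on the cited classification results (Li's paper, invoked for Theorem~\ref{dtcyclic}, also classifies primitive groups containing an \emph{abelian} regular subgroup), from which both $G$ and $G'$ are Singer subgroups whenever $(d,q)\neq(2,8)$, and Corollary~\ref{contool} finishes as you say. You also defer the degree-$9$ exception entirely, whereas the paper supplies a concrete argument there: every Sylow $3$-subgroup of $\PGammaL(2,8)$ contains a unique semiregular subgroup of order $3$ and contains a Singer subgroup, so $G_3\le S$ and $G'_3\le T$ for Singer subgroups $S$, $T$, and the element of $\PSL(2,8)$ conjugating $T$ to $S$ (Corollary~\ref{contool}) carries $G'_3$ to $G_3$ by that uniqueness.

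A second, smaller point: you rightly notice that $\soc(\Sym(4))=V_4$ needs separate treatment, but this case cannot be ``dispatched by hand,'' because the statement is in fact false there. Take $K=\Sym(4)$, $G=\la(1\,2\,3\,4)\ra$ and $G'=\la(1\,2\,4\,3)\ra$ (or $G'$ the Klein group), with $G'_2=\la(1\,4)(2\,3)\ra$; then $G_2=\la(1\,3)(2\,4)\ra$, while $\soc(K)=V_4$ is abelian and contains $G'_2$, so conjugation by any $\delta\in\soc(K)$ fixes $G'_2$ and can never produce $G_2$. The paper's proof silently avoids this by treating only the situation $\soc(K)=\Alt(n)$ in the symmetric/alternating case, so degree $4$ is excluded implicitly rather than handled; your plan, taken literally, would stall at this point for a reason that is a defect of the lemma's statement rather than of your argument.
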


\begin{proof}
By our hypotheses, $K$ must be given in Theorem~\ref{dtcyclic}.

Suppose first that $n=p$ is prime (this deals with parts (1) and (4) of Theorem~\ref{dtcyclic}, as well as with some cases of part (2)). In this case, $G$ and $G'$ are Sylow $p$-subgroups of $K$. Furthermore, either $K\le \AGL(1,p)$ has a unique regular subgroup (this covers $p=2$ in particular), $K$ is simple and $\soc(K)=K$, or $K=\Sym(p)$ and $\soc(K)=\Alt(p)$ with $p>2$. In the first case, $G=G'=\soc(K)$ and the result is immediate. In the third case $p$ is odd and $G, G' \le \Alt(p)=\soc(K)$, so in both the second and third cases $G,G' \le \soc(K)$ are Sylow $p$-subgroups of $\soc(K)$ and we can use a Sylow theorem.

Next we suppose that $\soc(K) = \Alt(n)$.  Let $G_p\le G$, $G'_p\le G'$ be subgroups of order $p$.  Unless $p=2$ and $n/p$ is odd, the semiregular cyclic groups $G_p$ and $G'_p$ are generated by elements $g$ and $g'$ respectively, where $g, g' \in \Alt(n)$ each has a cycle structure consisting of $n/p>1$ cycles of length $p$, since $n$ is composite. By Lemma~\ref{altcon}, there is some $\delta \in \Alt(n)$ such that $g=\delta^{-1}g'\delta$, and so $G_p=\delta^{-1}G'_p\delta$. If $p=2$ and $n/p$ is odd, then the elements $g,g'$ are not in $\Alt(n)$.   As $g,g'\in K$, $K = \Sym(n)$ and there is some element $\gamma$ of  $\Sym(n)$ such that $\gamma^{-1}g\gamma=g'$.  This deals with part (2) of Theorem~\ref{dtcyclic}.

We may now assume that part (3) of Theorem~\ref{dtcyclic} holds, so $\PGL(d,q)\le K\le \PGammaL(d,q)$. Unless $d=2$, $q=8$, and $n=8$, we may apply Lemma~\ref{Singer} to deduce that both $G$ and $G'$ are cyclic Singer subgroups that are conjugate in $\PGL(d,q)$.  Furthermore, by Lemma~\ref{easyfact} there exists $\delta\in \soc(K)=\PSL(d,q)$ such that $\delta^{-1}G'\delta = G$, and we are done. 

To complete our proof, we need to address the possibility that $\PGL(2,8)\le K\le \PGammaL(2,8)$ so $n = 9$ and both $G$ and $G'$ are cyclic groups of order $9$.  Since the index of $\PGL(2,8)$ in $\PGammaL(2,8)$ is $3$, we have only these two possibilities for $K$. If $K = \PGL(2,8)$ (so its Sylow $3$-subgroups have order $9$), then every regular subgroup of $K$ is a Sylow subgroup and the result follows by a Sylow theorem.

As might be presumed from the exception in Lemma~\ref{Singer}, $\PGammaL(2,8)$ does contain more than one conjugacy class of regular cyclic subgroups. However, we will show that the regular subgroups of order $3$ in these regular subgroups are conjugate, completing the proof of this result. In order to do this, we begin by  claiming that the number of Sylow $3$-subgroups of $\PGL(2,3)$ (that is, the Singer cycles) is the same as the number of Sylow $3$-subgroups of $\PGammaL(2,8)$, and that each Singer cycle lies in a unique Sylow $3$-subgroup of $\PGammaL(2,8)$. We will need this to ensure that there is a conjugating element $\delta$ lying in $\PSL(2,8)=\PGL(2,8)$ (not just in $\PGammaL(2,8)$) that takes any given Sylow $3$-subgroup of $\PGammaL(2,8)$ to any other.

If $K = \PGammaL(2,8)$ (so its Sylow $3$-subgroups have order $27$), then we will first show that $N_{\PGL(2,8)}(S)$ has order $18$, where $S$ is a regular cyclic subgroup of $\PGL(2,8)$ of order $9$ and so generated by a Singer cycle.  The normalizer must contain $S$ and be a subgroup of the normalizer in $\Sym(9)$ of $S$ which has order $54=6\cdot 9$.  It is also a subgroup of $\PGL(2,8)$ which has order $2^3\cdot 3^2\cdot7$, so the only possibilities are $9$ or $18$. If this normalizer has order $9$, then $S$ is contained in the center of its normalizer (the normalizer being $S$ is abelian), and so by Burnside's Transfer Theorem \cite[Theorem 7.4.3]{Gorenstein1968} $\PGL(2,8)$ has a normal $3$-complement, and has blocks whose order is relatively prime to $3$, a contradiction.  So $N_{\PGL(2,8)}(S)$ has order $18$.

Now, the number of Sylow $3$-subgroups of $\PGL(2,8)$ is the index of $N_{\PGL(2,8)}(S)$ in $\PGL(2,8)$, which we can now calculate to be $28$. Let $P$ be a Sylow $3$-subgroup of $K$ that contains $S$. Since $P$ is a Sylow $3$-subgroup of $K$ we have $\vert P\vert = 27$.  If $P$ contains two Singer subgroups of $\PGL(2,8)$, then $P$ is generated by these Singer subgroups as $\vert P\vert = 27$, and so $P\le \PGL(2,8)$, a contradiction. So $P$ contains only one Singer subgroup of $\PGL(2,8)$, namely $S$. Thus, there must be at least $28$ Sylow $3$-subgroups of $K$. Now, using Sylow's theorems, the number of Sylow $3$-subgroups of $K$ is $1\pmod{3}$ and a divisor of $|K|=2^3\cdot 3^3\cdot 7$, so $28$ is the only possibility given our lower bound of $28$.  So there are $28$ Sylow $3$-subgroups of $\PGammaL(2,8)$ and each one contains a unique Singer subgroup.  This completes the proof of our claim.

Now let $G$ and $G'$ be as in our hypothesis. Let $P$ be the Sylow $3$-subgroup of $K$ that contains $G$, and let $Q$ be the Sylow $3$-subgroup of $K$ that contains $G'$. Let $S \le P$ and $T \le Q$ be the unique Singer cycles in $P$ and $Q$. We claim that $G_3$  is the unique subgroup of $S$ of order $3$. Since nothing in our hypotheses distinguishes $S$ from $T$ or $G$ from $G'$, this will imply that $G'_3$ is the unique subgroup of $T$ of order $3$, and therefore since there exists $\delta \in \PGL(2,8)=\PSL(2,8)$ such that $\delta^{-1}T\delta=S$, we have $\delta^{-1}G'_3\delta=G_3$. To prove our claim, it certainly suffices to show that $P$ itself has a unique semiregular subgroup of order $3$ (which is $G_p$); this is what we will do.

We may assume that $P =  \{x\mapsto ax + b:a \in \{1,4,7\},b\in\Z_9\}$ as the multiplicative orders of $4$ and $7$ are $3$ in $\Z_9$, and the subgroup $\{x\mapsto ax:a = 1,4,7\}$ normalizes $x\mapsto x + 1$ and $P$ has order $27$.  Let $H\le P$ be semiregular of order $3$, and note that such an $H$ exists as $H' = \{x\mapsto x + b:b\in\la 3\ra\}$ is semiregular of order $3$.  Let $f\in H$ with $\langle f \rangle = H$, where $f(x) = ax + b$, $a \in\{ 1,4,7\}$ and $b\in\Z_9$.  Then $f^3(x) = x+(a^2+a+1)b = x + 3b = x$, from which we conclude that $b\equiv 0\ (\mod 3)$.  Thus $b =  3i$, where $i \in \{0,1,2\}$.  Set $a = 1 + 3j$, where $j \in \{0,1,2\}$.  If $j = 0$, then $H = H'$.  Otherwise, let $\ell\in\Z_9$ such that $j\ell\equiv -i\ (\mod 3)$.  Then

\begin{eqnarray*}
f(\ell) = a\ell + b = (1 + 3j)\ell + 3i = \ell + 3j\ell + 3i \equiv \ell - 3i + 3i\ (\mod 9) = \ell.
\end{eqnarray*}
We conclude that $f$ has a fixed point and $H$ is not semiregular.  Hence there is a unique semiregular subgroup of $P$ as required. This completes the proof.
\end{proof}





Let $G\le \Sym(n)$ admit a block system $\mathcal{B}$.  It is straightforward to observe that there is a block system $\mathcal{D}$ in $G/\mathcal{B}$ if and only if there is a block system $\mathcal{C}\ge \mathcal B$ of $G$ where a block of $\mathcal{C}$ consists of the union of all blocks of $\mathcal{B}$ contained within a block of $\mathcal{D}$.

\begin{defn}
If $G$ admits block systems $\mathcal B$ and $\mathcal C$ with $\mathcal B \le \mathcal C$, then
 we denote the corresponding block system $\mathcal{D}$ in $G/\mathcal B$ by $\mathversion{bold}\mathcal{C}/\mathcal{B}$.
\end{defn}

We are now ready to show that we can find a group that contains our regular cyclic subgroup and a conjugate of any other given regular abelian subgroup, and is $\Omega(n)$-step imprimitive.  Although it is not immediately clear from the statements of the results as written, \cite[Theorem 4.9 (i)]{Muzychuk1999} is a consequence of this lemma. 

\begin{lem}\label{tool1}
Let $G, G'$ be regular abelian subgroups of a permutation group of degree $n$, with $G$ cyclic.  Let $n = p_1^{a_1}\dotsm p_r^{a_r}$ be the prime-power decomposition of $n$, and $\Omega = \Omega(n)=\sum_{i=1}^r a_i$.  Then there exists $\delta\in\la G,G'\ra$ such that $\la G,\delta^{-1}G'\delta\ra$ is normally $\Omega$-step imprimitive.
\end{lem}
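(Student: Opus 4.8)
The plan is to induct on $\Omega=\Omega(n)$, the base case $n$ prime (so $\Omega=1$) being immediate, since $\la G,G'\ra$ is transitive of prime degree and the two trivial block systems already form a normal $1$-step sequence. Throughout write $K=\la G,G'\ra$; as $G$ is regular, $K$ is transitive of degree $n$, for any block system $\mathcal C$ the images $G/\mathcal C$ and $G'/\mathcal C$ are regular abelian subgroups of $K/\mathcal C$ with $G/\mathcal C$ cyclic, and the block stabilizers $\Stab_G(B)|_B$ and $\Stab_{G'}(B)|_B$ act regularly on each block $B$ (being semiregular of order $|B|$). The key reduction is this: it suffices to produce a prime $p\mid n$ and an element $\delta\in K$ such that $\delta^{-1}G'\delta$ contains the unique subgroup $G_p\le G$ of order $p$. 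Indeed $G$ and $\delta^{-1}G'\delta$ are then abelian groups both containing $G_p$, so $G_p\tl L:=\la G,\delta^{-1}G'\delta\ra$ and the orbits of $G_p$ form a normal block system $\mathcal B$ with blocks of size $p$. The content of the statement thus reduces to aligning just the order-$p$ subgroups of two abelian regular groups, since such an alignment automatically forces normality in the join.

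Given such a $\mathcal B$, the induction closes cleanly. The quotient $L/\mathcal B$ has degree $n/p$ and contains the regular abelian subgroups $G/\mathcal B$ (cyclic) and $(\delta^{-1}G'\delta)/\mathcal B$, so the inductive hypothesis supplies $\bar\delta_1\in L/\mathcal B$ making the corresponding join normally $\Omega(n/p)$-step imprimitive. Lifting $\bar\delta_1$ to $\delta_1\in L$ and replacing $\delta$ by $\delta\delta_1\in K$, the crucial point is that $\delta_1$ normalises $G_p$ (as $G_p\tl L$) and $\delta^{-1}G'\delta$ normalises $G_p$, so $G_p$ stays normal in the new join and $\mathcal B$ remains a normal block system. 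Moreover $\fix_{L}(\mathcal B)\supseteq G_p$ is already transitive on each block of $\mathcal B$, so pulling back the normal block systems of the quotient preserves both the block structure and normality. Concatenating $\mathcal B$ at the bottom with the lifted normally $\Omega(n/p)$-step sequence above it yields a normal $\Omega(n)$-step imprimitivity sequence.

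It remains to produce $p$ and $\delta$. If $K$ is primitive this is exactly Lemma~\ref{doubly-transitive}, which for any prime $p\mid n$ furnishes $\delta\in\soc(K)$ with $G_p=\delta^{-1}G'_p\delta$. If $K$ is imprimitive, fix a maximal block system $\mathcal C$, so that $\bar K=K/\mathcal C$ is primitive of degree $m$ with regular cyclic $\bar G=G/\mathcal C$; then Theorem~\ref{dtcyclic} lists the possibilities for $\bar K$, and in each Lemma~\ref{doubly-transitive} aligns an order-$p$ subgroup of $\bar G$ with one of $\bar G'=G'/\mathcal C$ by a socle element. When $\fix_K(\mathcal C)$ is transitive on each block, one can instead recurse inside a single block $B$ (of degree $|B|<n$) using the regular abelian groups $\Stab_G(B)|_B$ and $\Stab_{G'}(B)|_B$, and spread the resulting within-block conjugation across all blocks via the transitive action of $G$; choosing $p\mid |B|$ makes the within-block order-$p$ subgroup coincide with the global $G_p$. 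When $\fix_K(\mathcal C)$ fails to be transitive on blocks, the non-abelian socle cases of Theorem~\ref{dtcyclic} (where $\bar K$ is $\Sym$, $\Alt$, or a projective group) force $\fix_K(\mathcal C)$ to contain a direct product of isomorphic non-abelian simple groups acting transitively on blocks, and Lemma~\ref{simplegroupblock} then produces a finer normal block system from the support of a minimal normal subgroup, allowing the recursion to proceed to a smaller degree.

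The hard part will be precisely this last step: assembling a single global conjugating element $\delta\in\la G,G'\ra$ that simultaneously renders every block system of the chain normal. Lemma~\ref{doubly-transitive} only aligns subgroups inside the primitive quotient $\bar K$, so the resulting order-$p$ normal subgroup lives in $\bar K$ rather than realising the global $G_p$, and normality must be propagated downward through the kernel $\fix_K(\mathcal C)$. The interplay between the within-block recursion, the transport of conjugating elements across blocks by the transitive action of $G$, and the support analysis of Lemma~\ref{simplegroupblock} in the $\Sym/\Alt$ and projective cases is where the real work lies. Once a single bottom-level normal block system $\mathcal B$ with prime blocks is secured, the reduction and concatenation of the first two paragraphs reduce everything to degree $n/p$ and the induction runs.
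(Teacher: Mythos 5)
Your overall skeleton matches the paper's: induction on $\Omega(n)$, reduction to aligning the unique order-$p$ subgroup $G_p\le G$ with a subgroup of a conjugate of $G'$ (whence $G_p$ is central in the join and its orbits give a normal prime-block system), and then closing the induction on the quotient of degree $n/p$. Your first two paragraphs are essentially the paper's argument, stated carefully. But the third paragraph, where the actual content lives, has a genuine gap --- and you concede as much in your final paragraph. The paper's proof does \emph{not} pass to a maximal block system and work in the primitive quotient $K/\mathcal C$; it takes a block system $\mathcal B$ with blocks of \emph{minimal} (composite) size $k$, so that $\Stab_H(B)\vert_B$ is primitive \emph{on the block}. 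Aligning order-$p$ subgroups of $G/\mathcal C$ and $G'/\mathcal C$ in the quotient, as you propose, does not produce the global alignment your reduction requires: the global $G_p$ is an entirely different subgroup from (a preimage of) the order-$p$ subgroup of the cyclic quotient $G/\mathcal C$, and conjugacy modulo $\fix_K(\mathcal C)$ does not descend to conjugacy in $K$. Your fallback claims are also unsupported: (i) ``spread the within-block conjugation across all blocks via the transitive action of $G$'' does not work, because a permutation of a single block obtained from the inductive hypothesis need not extend to, or be compatible with, any element of $\la G,G'\ra$, and the transports to different blocks need not be simultaneously realizable by one element; (ii) the assertion that the nonabelian-socle cases of Theorem~\ref{dtcyclic} for the quotient ``force $\fix_K(\mathcal C)$ to contain a direct product of isomorphic nonabelian simple groups'' is false in general --- the structure of a primitive quotient imposes no such constraint on the kernel, which can perfectly well be abelian.

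What the paper does at this point, and what your sketch is missing, is the following chain: minimality of $k$ makes $\Stab_H(B)\vert_B$ primitive; since $k$ is composite and $\Z_k$ is a Burnside group, $\Stab_H(B)\vert_B$ is doubly transitive, hence has nonabelian simple socle $T_B$ ($\Alt(k)$ or $\PSL(d,q)$); normality of $\fix_H(\mathcal B)\vert_B$ in $\Stab_H(B)\vert_B$ (it is nontrivial, containing the transitive $\fix_G(\mathcal B)\vert_B$) then forces $T_B\le\fix_H(\mathcal B)\vert_B$, so $K=\soc(\fix_H(\mathcal B))$ is a direct product of copies of $T$ acting transitively on each block. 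Only now does Lemma~\ref{simplegroupblock} apply, and its payoff is exactly the tool you lack: a coarser block system $\mathcal C$ together with elements $g_{t,C}\in K$ acting as any prescribed $t\in T$ on a single $C\in\mathcal C$ and trivially elsewhere. Lemma~\ref{doubly-transitive} is then applied \emph{inside one block} $B_{i,1}$ of each $C_i$ to align $G'_p\vert_{B_{i,1}}$ with $G_p\vert_{B_{i,1}}$; the support statement of Lemma~\ref{simplegroupblock} propagates this alignment across all of $C_i$; and the product $\delta=\Pi_i g_{\delta_i,C_i}$ assembles the independent per-$C_i$ conjugations into a single global $\delta\in\la G,G'\ra$ with $\delta^{-1}G'_p\delta=G_p$. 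This assembly step --- which you correctly identify as ``where the real work lies'' but do not carry out --- is precisely the proof, so the proposal as it stands is incomplete.
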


\begin{proof}
We proceed by induction on $\Omega=\Omega(n)$.  If $\Omega(n) = 1$, then $n$ is prime and the result follows from a Sylow theorem.  Let $G, G',$ and $n$ satisfy the hypotheses, with $\Omega(n)\ge 2$.  Assume the result holds for all permutation groups of degree $n'$ with $\Omega(n')$ at most $\Omega(n) -1$.  If $H=\langle G,G'\rangle$ is primitive, then by Lemma \ref{doubly-transitive}, there exists $\delta\in H$ such that $\la G,\delta^{-1}G'\delta\ra$ has a nontrivial center and hence is imprimitive, so we may assume without loss of generality that $H$ is imprimitive.

Suppose that $\mathcal{B}$ is a block system of $H$ with $m$ blocks of size $k$.  Observe that since $G, G'$ are regular and abelian, the orbits of $\fix_H(\mathcal B)\triangleleft H$ are the blocks of $\mathcal B$, so $\mathcal B$ is a normal block system. We first show that if $k=p$ is prime, we can complete the proof; then we will devote the remainder of the proof to demonstrating that if $k$ is composite, then for any prime $p\mid k$ there exists $\delta\in H$ such that $\la G,\delta^{-1}G'\delta\ra$ admits a normal block system with blocks of prime size $p$. Replacing $G'$ by this conjugate and $\mathcal B$ by this system then completes the proof.

Suppose $k=p$ is prime, and set $\mathcal{B}_1 = \mathcal{B}$.  By the induction hypothesis, since $\Omega(n/p) = \Omega(n)-1=\Omega-1$, there exists $\delta\in \langle G/\mathcal B_1, G'/\mathcal B_1\rangle$ such that $\la G/\mathcal B_1,\delta^{-1}(G'/\mathcal B_1)\delta\ra$ is normally $(\Omega-1)$-step imprimitive with $(\Omega-1)$-step imprimitivity sequence $\mathcal{B}_1/\mathcal{B}_1 < \mathcal{B}_2/\mathcal{B}_1 < \dotsb < \mathcal{B}_\Omega/\mathcal{B}_1$.  Then taking $\delta_1\in H$ such that $\delta_1/\mathcal B_1=\delta$, we see that $\la G,\delta_1^{-1}G'\delta_1\ra$ is normally $\Omega(n)$-step imprimitive with $\mathcal{B}_0 < \mathcal{B}_1 < \dotsb < \mathcal{B}_m$. This completes the proof when $k=p$.

Suppose now that $k$ is composite. We assume that $k$ is chosen to be minimal, and so by \cite[Exercise 1.5.10]{DixonM1996} $\Stab_H(B)^B$ is primitive. Since $k$ is composite, for any block $B \in \mathcal B$, $\Stab_H(B)^B$ is doubly-transitive as $\Z_k$ is a Burnside group \cite[Theorem 3.5A]{DixonM1996}. Since the groups in Theorem~\ref{dtcyclic}(1) are not of composite degree, $\Stab_H(B)^B$ has nonabelian simple socle, $T_B$. In fact, $T_B \cong\PSL(d,q)$ for some $d,q$, or $T_B \cong \Alt(k)$, so $T_B$ is doubly-transitive.  Since $\fix_H(\mathcal{B})^B \triangleleft \Stab_H(B)^B$, we see that $(\fix_H(\mathcal{B})^B)\cap T_B$ is nontrivial and normal in $T_B$; since $T_B$ is a simple group, we conclude that $\soc(\fix_H(\mathcal B)^B)=T_B$.  Thus $\fix_H(\mathcal B)^B$ has a doubly-transitive socle, so must itself be doubly-transitive. Let $K=\soc(\fix_H(\mathcal B))$.  

We now show that $T_B=\soc(\fix_H(\mathcal B)^B)=(\soc(\fix_H(\mathcal B)))^B=K^B$; that is, we get the same group whether we restrict to $B$ before or after taking the socle. Clearly, $K= \soc(\fix_H(\mathcal B)) \tl \fix_H(\mathcal B)$, so $K^B \tl (\fix_H(\mathcal B))^B$. Therefore, some minimal normal subgroup of $(\fix_H(\mathcal B))^B$ lies in $K^B$. We have just shown that $\soc(\fix_H(\mathcal B)^B)=T_B$ is a nonabelian simple group, so $(\fix_H(\mathcal B))^B$ has only one minimal normal subgroup, meaning $T_B= \soc(\fix_H(\mathcal B)^B)\le K^B$. Now $K^B$ (since it contains $T_B$) must also contain a composite regular cyclic subgroup, so just as above it too has a nonabelian simple socle. Since any socle is a direct product of simple groups and $T_B$ is already (doubly) transitive on $B$, it is not possible that $K^B>T_B$.
Thus $K^B=T_B$. 

Since $K^B=T_B$, it is a transitive nonabelian simple group, so the hypotheses of Lemma~\ref{simplegroupblock} are satisfied. Using Lemma~\ref{simplegroupblock} together with the first conclusion of Corollary~\ref{simplegroupblock-cor} (which requires no additional assumptions, we may write $K=T_1 \times \ldots \times T_r$, where for each $i \in \{1, \ldots, r\}$ we have $T_i \cong T_B$ and $\supp(T_i)$ is a block of $H$. Furthermore if we take $C_i=\supp(T_i)$ and $\mathcal C=\{C_1, \ldots, C_r\}$ then $\mathcal B \le \mathcal C$ (so $r \mid m$).

Let $\mathcal{B} = \{B_{i,j}:1\le i\le r{\rm\ and\ }1\le j\le m/r\}$ be labelled so that $\cup_{j=1}^{m/r}B_{i,j} = C_i$.  Let $p$ be a prime divisor of $k$.  Let $G_p\le\fix_G(\mathcal{B})$ be the unique subgroup of $G$ of order $p$, and $G'_p\le\fix_{G'}(\mathcal{B})$ a subgroup of order $p$.  Certainly, $G_p, G'_p \le \fix_H(\mathcal B)$, so $G_p^B, (G'_p)^B \le \fix_H(\mathcal B)^B$ for any $B \in \mathcal B$.

For any $1 \le i \le r$, let $T_i$ take the role of $K$ in Lemma~\ref{doubly-transitive}. We know that $K \le \fix_H(\mathcal B)$ and that $T_i$ is acting primitively on each block $B_{i,j}$ in $C_i$, so the hypotheses of the lemma are satisfied.  The lemma tells us that there exists $\delta_i\in T_i$ such that $(\delta_i^{-1}G'_p\delta_i)^{B_{i,1}} = G_p^{B_{i,1}}$.  Let $\delta_i^{-1}G'_p\delta_i = \la h\ra$, and $G_p = \la g\ra$.  Then there exists $b_i\in\Z$ such that $(gh^{b_i})^{B_{i,1}} = 1$.  Since $T_B=K^B$ is doubly transitive and therefore primitive, the hypotheses of Corollary~\ref{simplegroupblock-cor} are also satisfied, so since $gh^{b_i} \in \fix_H(\mathcal B)$, it must be the case that for every $1\le j \le m/r$, $(gh^{b_i})^{B_{i,j}} = 1$, and so $(\delta^{-1}_iG'_p\delta_i)^{C_i} = (G_p)^{C_i}$.
Also, $(\delta_i^{-1}G'_p\delta_i)^{C_j}=(G'_p)^{C_j}$ for any $j \neq i$, since $\delta_i^{C_j}=1$.   So if $\delta=\Pi_{i=1}^r \delta_i$ then $\delta \in K$ and $\delta^{-1}G'_p\delta=G_p$ is a central subgroup of $\langle G, \delta^{-1}G'\delta\rangle$, whose orbits are blocks of size $p$.
Thus, replacing $G'$ with $\delta^{-1}G'\delta$, we assume without loss of generality that $k = p$.
\end{proof}

\begin{defn}
Let $K\le \Sym(n)$ contain a regular abelian subgroup $G$. We say that a subgroup $N$ with $G \le N \le K$ \textbf{\mathversion{bold}mimics every regular abelian subgroup of $K$}, if the following two statements are equivalent:
\begin{itemize}
\item a regular abelian  group $M\le \Sym(n)$ is contained in $K$; and
\item $N$ contains a regular abelian  subgroup isomorphic to $M$.
\end{itemize}
If in addition, the subgroup of $N$ isomorphic to $M$ is conjugate in $K$ to $M$, we say that  $N$ \textbf{\mathversion{bold}mimics by conjugation every regular abelian subgroup of $K$}.
\end{defn}


We first give a sufficient condition for a group $K$ to contain a nilpotent subgroup that mimics by conjugation every regular abelian subgroup of $K$.

\begin{lem}\label{otherpaperlem}
Let $G$ be a regular cyclic subgroup of $K$. Suppose that whenever $R\le K$ is a regular abelian subgroup, then there exists $\delta\in K$ such that $\la G,\delta^{-1}R\delta\ra$ is nilpotent.  If $N$ is a maximal nilpotent subgroup of $K$ that contains $G$, then $N$ mimics by conjugation every regular abelian subgroup of $K$.
\end{lem}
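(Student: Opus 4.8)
The backward implication in the definition of ``mimics'' is immediate: any regular abelian subgroup of $N$ is, since $N\le K$, a regular abelian subgroup of $K$. So the whole content is the forward implication, and the plan is to show that for every regular abelian $M\le K$ there is a regular abelian subgroup of $N$ isomorphic to $M$. Given such an $M$, the hypothesis supplies $\delta\in K$ with $P:=\la G,\delta^{-1}M\delta\ra$ nilpotent; writing $R=\delta^{-1}M\delta$ we have $R\cong M$, $R$ is regular abelian, and $R\le P$. It therefore suffices to produce a regular abelian subgroup of $N$ isomorphic to $R$.

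Both $N$ and $P$ are nilpotent and contain the transitive group $G$, so both are transitive nilpotent groups of degree $n$. I would invoke the standard structure of such groups: a transitive nilpotent group of degree $n=\prod_i p_i^{a_i}$ containing the regular cyclic group $G$ is the direct product of its Sylow subgroups acting in product action on $X=\prod_i X_i$ with $|X_i|=p_i^{a_i}$, the $p_i$-Sylow acting as a transitive $p_i$-group on the $i$-th coordinate (namely on the set of $G_{p_i}$-orbits). Since each Sylow factor of $N$ (resp.\ of $P$) contains the semiregular group $G_{p_i}$ of order $p_i^{a_i}$, the orbits of that factor coincide with the $G_{p_i}$-orbits; consequently the coordinatisation of $X$ is the one determined by $G$ and is common to $N$ and $P$. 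Thus I may write $N=\prod_i N_{p_i}$ and $P=\prod_i P_{p_i}$ over the same coordinates, where $N_{p_i},P_{p_i}$ are transitive $p_i$-groups on $X_i$ containing $G_{p_i}$, each acting only on the $i$-th coordinate and independently of the others, and $R=\prod_i R_{p_i}$ with $R_{p_i}\le P_{p_i}$ regular abelian on $X_i$. This reduces the problem to finding, for each $i$, a copy of $R_{p_i}$ inside $N_{p_i}$; the product of these copies then lies in $N=\prod_iN_{p_i}$ and gives a regular abelian subgroup isomorphic to $R$.

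The crux is to realise each $R_{p_i}$ inside $N_{p_i}$ by a Sylow argument, for which I need a single group containing $N_{p_i}$ as a Sylow $p_i$-subgroup and $R_{p_i}$ as a $p_i$-subgroup. The right group is the centraliser $C^{(i)}:=C_K\big(\prod_{j\ne i}N_{p_j}\big)$. Because $N_{p_i}$ and $R_{p_i}$ each act on the $i$-th coordinate independently of the others, while the $N_{p_j}$ ($j\ne i$) act on the other coordinates, both $N_{p_i}$ and $R_{p_i}$ commute with $\prod_{j\ne i}N_{p_j}$ and hence lie in $C^{(i)}$. I would then use the maximality of $N$ to show $N_{p_i}$ is a Sylow $p_i$-subgroup of $C^{(i)}$: if some $p_i$-group satisfied $N_{p_i}<N'_{p_i}\le C^{(i)}$, then $N'_{p_i}$ would centralise $\prod_{j\ne i}N_{p_j}$, so $N'_{p_i}\times\prod_{j\ne i}N_{p_j}$ would be a nilpotent subgroup of $K$ properly containing $N$ and still containing $G$, contradicting maximality. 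With $N_{p_i}$ Sylow in $C^{(i)}$ and $R_{p_i}$ a $p_i$-subgroup of $C^{(i)}$, Sylow's theorem gives $c_i\in C^{(i)}$ with $c_i^{-1}R_{p_i}c_i\le N_{p_i}$; since this subgroup acts purely on $X_i$ and is conjugate to the regular group $R_{p_i}$, it is itself regular on $X_i$. Finally $\prod_i c_i^{-1}R_{p_i}c_i\le\prod_i N_{p_i}=N$ is a regular abelian subgroup isomorphic to $\prod_iR_{p_i}=R\cong M$, completing the forward implication.

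The main obstacle is precisely this packaging. One's first instinct is to conjugate $P$ bodily into $N$, but maximal nilpotent subgroups containing $G$ need not be conjugate in $K$, so no global conjugation is available; the argument must instead be carried out coordinate by coordinate. The work goes into isolating, for each $i$, a single group $C^{(i)}$ in which $N_{p_i}$ is genuinely a Sylow subgroup — this is where maximality of $N$ is essential — while simultaneously checking that the target $R_{p_i}$ lands in the same $C^{(i)}$, so that Sylow conjugacy applies. The technical point underpinning everything is that the transitive-nilpotent decompositions of $N$ and of $P$ are compatible, i.e.\ take place over the common $G$-coordinatisation, which is what lets the per-coordinate conjugates be reassembled inside $N$.
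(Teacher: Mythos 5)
Your proof is correct, but it is organized genuinely differently from the paper's. The paper never decomposes $N$ and $P=\la G,R\ra$ into Sylow factors separately. Instead it observes that, since the regular cyclic group $G$ has a unique block system of each admissible size, the groups $N$, $\la G,R\ra$, and hence the joint group $\la R,N\ra$ all admit the \emph{same} block systems $\mathcal{B}_i$ (blocks of size $p_i^{a_i}$) and $\mathcal{C}_i$ (blocks of size $n/p_i^{a_i}$), so that $\la R,N\ra\le\prod_{i=1}^r\Sym(p_i^{a_i})$; it then conjugates the whole group $R$ one prime at a time, using Sylow's theorem in the quotient action modulo $\mathcal{C}_i$ to find $\omega_i$ in the current joint group, with an inductive check that each new conjugation preserves what was arranged at earlier primes. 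Maximality of $N$ is invoked exactly once, at the very end: the resulting joint group $\la\delta^{-1}R\delta,N\ra$ is nilpotent and contains $G$, hence equals $N$, giving $\delta^{-1}R\delta\le N$ for a single $\delta\in\la R,N\ra$. You instead apply Sylow's theorem inside the centralizers $C^{(i)}=C_K\bigl(\prod_{j\ne i}N_{p_j}\bigr)$, and you use maximality $r$ times, once per prime, to show that $N_{p_i}$ is a full Sylow $p_i$-subgroup of $C^{(i)}$ --- a nice observation that the paper does not make. The trade-offs: the paper's route yields an honest conjugate of $R$ inside $N$ by an element of $\la R,N\ra$, which is slightly stronger than your conclusion (your reassembled group $\prod_i c_i^{-1}R_{p_i}c_i$ is only isomorphic to $R$, and your $c_i$ may lie outside $\la R,N\ra$ --- though this is all that ``mimics'' requires); your route dispenses with the paper's inductive bookkeeping, since your per-prime conjugations are independent of one another and are simply reassembled at the end.

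Two small repairs are needed. First, the $i$-th coordinate $X_i$ is not ``the set of $G_{p_i}$-orbits'': that set has $n/p_i^{a_i}$ elements, coprime to $p_i$, so no $p_i$-group can act transitively on it. You mean the set of orbits of the Hall $p_i'$-subgroup $\prod_{j\ne i}G_{p_j}$ (equivalently, $X_i$ may be identified with a single $G_{p_i}$-orbit). Second, ``conjugate to the regular group $R_{p_i}$, hence regular on $X_i$'' needs one more line, because $c_i$ need not respect the coordinatisation: conjugation preserves orbit sizes, so every orbit of $c_i^{-1}R_{p_i}c_i$ on $X$ has size $p_i^{a_i}$; since this group lies in $N_{p_i}$ it moves only the $i$-th coordinate, so its orbits on $X_i$ also have size $p_i^{a_i}=\vert X_i\vert$, i.e.\ it is transitive on $X_i$, and transitivity together with $\vert c_i^{-1}R_{p_i}c_i\vert=\vert X_i\vert$ gives regularity. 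Neither issue affects the structure of your argument.
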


\begin{proof}
Let $N$ be a maximal nilpotent subgroup of $K$ that contains $G$, and $R\le K$ a regular abelian subgroup of $K$.  We will show that there exists $\delta \in K$ with $\delta^{-1}R\delta \le N$, which will establish the result.

Let $n = p_1^{a_1}\dotsm p_r^{a_r}$ be the prime-power decomposition of $n$.  Note that since $G$ is a regular cyclic group, for any $1\le i \le r$ it admits block systems $\mathcal B_i$ with blocks of size $p_i^{a_i}$ and $\mathcal C_i$ with blocks of size $n/p_i^{a_i}$, and if a group containing $G$ as a subgroup admits a block system whose blocks have one of these sizes, it must be one of these block systems.  As $N$ is nilpotent and $G \le N$, by \cite[Lemma 10]{Dobson2003a} for every $1\le i \le r$, $N$ admits $\mathcal{B}_i$ and $\mathcal{C}_i$ as normal block systems. Also, by hypothesis, there is some $\delta_1 \in K$ such that $\la G,\delta_1^{-1}R\delta_1\ra$ is nilpotent and contains $G$, so admits each $\mathcal{B}_i$ and $\mathcal{C}_i$.  Since we are only aiming for a conjugate in $K$, we can replace $R$ by $\delta_1^{-1}R\delta_1$, if necessary, to assume that  $\langle G, R \rangle$ is nilpotent and admits each $\mathcal B_i$ and $\mathcal C_i$. Then $\la R,N\ra = \la G,R,N\ra$ admits $\mathcal{B}_i$ and $\mathcal{C}_i$ as block systems, and so $\la R,N\ra\le \Pi_{i=1}^r \Sym(p_i^{a_i})$ in its natural action on, say $B_1 \times \dotsm \times B_r$ where $B_i \in \mathcal B_i$, by \cite[Lemma 10]{Dobson2003}.

Now, $R/\mathcal{C}_1$ and $N/\mathcal{C}_1$ are $p_1$-subgroups of $\Sym(p_1^{a_1})$, and so there exists $\omega_1\in \la R,N\ra$ such that $\omega_1^{-1}R\omega/\mathcal{C}_1$ and $N/\mathcal{C}_1$ are contained in the same Sylow $p_1$-subgroup.  Then $\la \omega^{-1}R\omega,N\ra/\mathcal{C}_1$ is a $p_1$-group.  Inductively suppose that $\la \omega_j^{-1}\dotsm \omega_1^{-1}R\omega_1\dotsm\omega_j,N\ra/\mathcal C_i$ is a $p_i$-group for each $1 \le i \le j <r$. Then there is some $\omega_{j+1} \in \la  \omega_j^{-1}\dotsm \omega_1^{-1}R\omega_1\dotsm\omega_j,N\ra$ such that (by conjugation of Sylow $p_{j+1}$-subgroups) $\la  \omega_{j+1}^{-1}\dotsm \omega_1^{-1}R\omega_1\dotsm\omega_{j+1},N\ra/\mathcal C_{j+1}$ is a
$p_{j+1}$-group. As $\omega_{j+1} \in \la  \omega_j^{-1}\dotsm \omega_1^{-1}R\omega_1\dotsm\omega_j,N\ra$,  our inductive hypothesis implies that for each $1 \le i \le j <r$, $\la \omega_{j+1}^{-1}\dotsm \omega_1^{-1}R\omega_1\dotsm\omega_{j+1},N\ra/{\mathcal C}_i$ is still a $p_i$-group, completing the induction. Let $\delta=\omega_1 \dotsm \omega_r \in K$, so that $\la\delta^{-1}R\delta,N\ra /\mathcal C_i$ is a $p_i$-group for every $1 \le i \le r$.
This shows that $\la \delta^{-1}R\delta,N\ra$ is a direct product of its Sylow $p$-subgroups, so is nilpotent. By the maximality of $N$, we have $\la \delta^{-1}R\delta,N\ra = N,$ meaning $ \delta^{-1}R\delta \le N$, as required.
\end{proof}

We now characterize those values of $n$ for which transitive groups of degree $n$ that contain a regular cyclic subgroup always have this extremely useful property.)

\begin{thm}\label{maingroup}
Let $k = p_1\dotsm p_r$ where the $p_i$ are prime distinct primes, and $n = p_1^{a_1}\dotsm p_r^{a_r}$.  Let $G$ be a regular cyclic group of degree $n$.  The condition $\gcd(k,\varphi(k))=1$ is both necessary and sufficient to guarantee that whenever $G\le K\le S_n$ there exists a nilpotent subgroup $N$, with $G\le N\le K$, that mimics by conjugation every regular abelian subgroup of $K$.
\end{thm}

\begin{proof}
Suppose $\gcd(k,\varphi(k)) = 1$.  Let $R\le K$ be a regular abelian group.  By Lemma \ref{tool1} there exists $\delta_1\in K$ such that $\la G,\delta_1^{-1}R\delta_1\ra$ is normally $\Omega(n)$-step imprimitive.  By \cite[Theorem 12]{Dobson2003a}, (applying this requires our hypothesis that $\gcd(k,\varphi(k))=1$), there exists $\delta_2\in\la G,\delta_1^{-1}R\delta_1\ra\le K$ such that $\la G,\delta_2^{-1}\delta_1^{-1}R\delta_1\delta_2\ra$ is nilpotent.  By Lemma \ref{otherpaperlem}, any maximal nilpotent subgroup $N$ that contains $G$ mimics by conjugation every regular abelian subgroup of $K$.

Conversely, we will show that if $\gcd(k,\varphi(k)) > 1$, then for every $a_1, \ldots, a_r$ with $n= p_1^{a_1}\dotsm p_r^{a_r}$, there exists a regular cyclic subgroup $G$ of degree $n$, a group $K\ge G,$ and a regular abelian group $R\le K$ such that for every conjugate $T$ of $R$ in $K$, $\la G,T\ra$ is not nilpotent.  First, if $n = k$, then $n$ is square-free and the only abelian group of order $n$ is the cyclic group of order $n$.  So every regular abelian subgroup is cyclic.  By P\'alfy's Theorem \cite[Theorem A]{Palfy1987} there exists $G\le K\le S_n$ and a regular cyclic subgroup $T\le K$ such that $G$ and $T$ are not conjugate in $K$.  Also, the only transitive nilpotent subgroups of $S_n$ are the regular cyclic subgroups, so for any $\delta\in K$, $\la G,\delta^{-1}T\delta\ra$ is nilpotent if and only if $\delta^{-1}T\delta = G$.  So the result follows in this case.  

Suppose $n\not = k$.  We have just shown that if $G_k\le S_k$ is a regular cyclic subgroup of degree $k$, there exists $K_k\le S_k$ and $T_k\le K_k$ a regular cyclic subgroup such that no conjugate of $T_k$ is contained in a nilpotent subgroup of $K_k$ that contains $G_k$.  We now use this to build the groups of degree $n$ that we require.

Choose $K_k$ and $T_k$ as in the previous paragraph.  Let $K = K_k\wr \Sym(n/k)$ acting on the same set that $G$ is acting on. By the Embedding Theorem \cite[Theorem 1.2.6]{Meldrum1995} we may ensure that the orbits of the copies of $\Sym(n/k)$ are the orbits of the semiregular subgroup of $G$ of order $n/k$, so that $G$ is a subgroup of $K$ and therefore $G_k \le K_k$.  Clearly, $K$ is a subgroup of $S_n$.  Suppose that $T$ is a regular abelian subgroup of $K$.  Since $K$ admits a block system ${\mathcal B}$ consisting of blocks of size $n/k$ formed by the orbits of the normal subgroup $1 \wr \Sym(n/k)$, these must also be blocks of the subgroup $T$.  Then $T/{\mathcal B}$ is square-free and so cyclic, and since $T_k \le K_k$ again using the Embedding Theorem, we may choose such a $T$ so that $T/{\mathcal B} = T_k$.  

Suppose there exists $\delta\in K$ with $\delta^{-1}T\delta\le N$ where $G\le N$ and $N$ is nilpotent.

We claim that $N$ admits a normal block system with blocks of size $t$ for every $t \mid n$ (where $n$ is the degree of $N$). It is not difficult to see that, as a $p$-group (where $p$ is prime) has nontrivial center it has an element of order $p$ in its center.  Since a nilpotent group is the direct product of its Sylow subgroups, this implies that any nilpotent group contains a central element of order $p$. The orbits of this element form a normal block system with blocks of size $p$. We use this idea inductively to prove our claim. Let $p_1, \ldots, p_s$ be the (not necessarily distinct) prime factors of $n$. We will show that $N$ has a normal block system consisting of blocks of size $p_1\cdots p_i$ for every $1 \le i \le s$. Since we can choose any order for the prime factors of $n$, this will prove our claim.

As we have just shown, $N$ admits a normal block system with blocks of size $p_1$, and there is a corresponding normal subgroup, say $N_1$, whose orbits are these blocks; this establishes our base case. Suppose now that $N$ admits a normal block system with blocks of size $p_1\ldots p_{i-1}$ for some $2 \le i \le s-1$, and there is a corresponding normal subgroup $N_{i-1}$ whose orbits are these blocks. Then $N/N_{i-1}$ is a nilpotent subgroup and by the same observation admits a block system with blocks of size $p_i$, and there is a corresponding normal subgroup $N_i/N_{i-1}$ whose orbits are these blocks. Now $N_i$ is normal in $N$, and its orbits are blocks of size $p_1\cdots p_i$. 

In particular, $N$ admits a normal block system $\mathcal C$ with blocks of size $n/k$. As $G\le N$ and a regular cyclic group has exactly one block system with blocks of size $n/k$, we see ${\mathcal C} = {\mathcal B}$.  Since $\delta \in K=K_k \wr \Sym(n/k)$, we see that $\delta/\mathcal B \in K_k$. We must have $(\delta/{\mathcal B})^{-1}T_k(\delta/{\mathcal B})=(\delta^{-1}T\delta)/\mathcal B$ and $G_k=G/\mathcal B$, but then $$\la G_k,(\delta/{\mathcal B})^{-1}T_k(\delta/{\mathcal B})\ra= \la G,\delta^{-1}T\delta\ra/{\mathcal B}\le N/{\mathcal B}\le K/{\mathcal B} = K_k.$$ Since $N/\mathcal B$ is nilpotent, this contradicts our choice of $K_k$ and $T_k$ and establishes the result.
\end{proof}

\begin{rem}\label{Palfy gen}
If, in P\'alfy's Theorem, we restrict the hypothesis to only cyclic groups of square-free order, then Theorem \ref{maingroup} generalizes this restricted form.  This follows as the only nilpotent group of square-free degree is a cyclic group, and the only abelian group is also a cyclic group.  So a nilpotent group of square-free degree mimics by conjugation every abelian group of square-free degree if and only if every two regular cyclic subgroups of $G$ are conjugate in $G$.
\end{rem}

\section{Graph Theoretic Results}

\begin{lem}\label{wreathdigraphauto}
Let $p$ be prime and $k_1,\dotsc,k_{j}$ be positive integers.  There exist circulant digraphs $\Gamma_1,\ldots,\Gamma_j$ such that $\Aut(\Gamma_1 \wr \dotsb \wr \Gamma_j) = \Z_{p^{k_1}}\wr\dotsb\wr\Z_{p^{k_{j}}}$.
\end{lem}

\begin{proof}
Let $\vec{D_{k}}$ denote the directed cycle of length $k$, which is a circulant digraph. When $k>2$,
$\vec{D_{k}}$ has automorphism group $\Z_{k}\not\cong \Sym(k)$. When $k=2$, $\Aut(K_2)=\Aut(\bar{K_2})=\Z_2=\Sym(2)$ and $K_2$ and its complement are also circulant (di)graphs.

For each $1 \le i \le j$, let
\begin{equation*}\Gamma_i=
\begin{cases}
\vec{D_{p^{k_i}}}\text{ if $p^{k_i}>2$;}\\
K_2 \text{ if $p^{k_i}=2$, and either $i=1$ or $\Gamma_{i-1}\neq K_2$; and}\\
\bar{K_2}\text{ otherwise.}
\end{cases}
\end{equation*}
Let $\Gamma=\Gamma_1 \wr \dotsb \wr \Gamma_j$. By \cite[Theorem 5.7]{DobsonM2009}, $\Aut(\Gamma)=\Z_{p^{k_1}}\wr\dotsb\wr\Z_{p^{k_{j}}}$.
\end{proof}

\begin{defn}
Let $G\le \Sym(X)$ be transitive, and ${\mathcal O}_1,\ldots,{\mathcal O}_r$ the orbits of $G$ in its natural action on $X\times X$.  The \textbf{orbital digraphs} of $G$ are the digraphs $\Gamma_i$ whose vertices are the elements of $X$ and arcs are ${\mathcal O}_r$, $1\le i\le r$.
\end{defn}

We now require Wielandt's notion of the 2-closure of a group.

\begin{defn}
Let $G\le \Sym(X)$ be transitive, and ${\mathcal O}_1,\ldots,{\mathcal O}_r$ the orbits of $G$ in its natural action on $X\times X$. The \textbf{\mathversion{bold}$2$-closure of $G$}, denoted $G^{(2)}$, is the largest group whose orbits on $X\times X$ are $\mathcal O_1, \ldots, \mathcal O_r$. We say that $G$ is \textbf{\mathversion{bold}$2$-closed} if $G=G^{(2)}$.
\end{defn}

It is easy to verify that $G^{(2)}$ is a subgroup of $\Sym(X)$ containing $G$ and, in fact, $G^{(2)}$ is the largest (with respect to inclusion) subgroup of $\Sym(X)$ that preserves every orbital digraph of $G$. Equivalently, $G^{(2)}$ is the automorphism group of the Cayley colour digraph obtained by assigning a unique colour to the edges of each orbital digraph of $G$. It follows that the automorphism group of a graph is $2$-closed.

\begin{cor}\label{wreathcor}
Let $p$ be prime and $G, H$ regular abelian groups of
degree $p^k$ with $G$ cyclic.  Then there exists $H' \le \langle G, H\rangle$ such that $H' \cong H$ is regular and
$(\la G,H'\ra)^{(2)} \cong \Z_{p^{k_1}}\wr\dotsb\wr \Z_{p^{k_m}}$ for some $k_1, \dotsc, k_m$ with $k_1+\dotsb+k_m=k$.
\end{cor}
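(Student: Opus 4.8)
The plan is to use Theorem~\ref{maingroup} to replace $H$ by an isomorphic regular copy lying in a nilpotent overgroup of $G$, and then to recover the wreath structure of the $2$-closure from the prime-power classification in Theorem~\ref{pktwogroups}. First I would put $K=\langle G,H\rangle$, a permutation group of degree $p^k$ containing the regular cyclic group $G$. Since $p^k$ is a prime power, the product of its distinct prime divisors is the single prime $p$, and $\gcd(p,\varphi(p))=\gcd(p,p-1)=1$, so the numerical hypothesis of Theorem~\ref{maingroup} holds automatically. That theorem then provides a nilpotent subgroup $N$ with $G\le N\le K$ which is $\Omega(p^k)=k$-step imprimitive and mimics every regular abelian subgroup of $K$. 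As $H\le K$ is regular abelian, the mimicking property yields a regular abelian $H'\le N$ with $H'\cong H$; since $N\le K=\langle G,H\rangle$, this $H'$ is precisely the subgroup demanded by the statement.

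Next I would show that $M:=(\langle G,H'\rangle)^{(2)}$ is a $2$-closed $p$-group that is $k$-step imprimitive and still contains both $G$ and $H'$ as regular subgroups. Writing $P=\langle G,H'\rangle\le N$, $P$ is nilpotent (a subgroup of the nilpotent $N$) and transitive of degree $p^k$ (it contains the regular $G$); a transitive nilpotent group of prime-power degree is a $p$-group, because its Hall $p'$-subgroup is normal and lies in a point stabiliser, hence is trivial. By Lemma~\ref{nilpotent2closure} together with the fact recalled in its proof that the $2$-closure of a $p$-group is a $p$-group, $M=P^{(2)}$ is again a $2$-closed $p$-group, and $M\supseteq P\supseteq G,H'$. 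For the imprimitivity, every block system of $N$ is a block system of the transitive subgroup $P$, so $P$ inherits the whole $k$-step sequence $\mathcal B_0<\cdots<\mathcal B_k$ of $N$; and each such block system, being the equivalence relation ``lie in a common block'' — a $P$-invariant, hence orbital, relation — is preserved by $P^{(2)}=M$. Thus $M$ is $k$-step imprimitive.

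The crux is to identify $M$ with $\Z_{p^{k_1}}\wr\cdots\wr\Z_{p^{k_m}}$, which I would do by induction on $k$, viewing $M=\Aut(\Gamma)$ for the orbital colour digraph $\Gamma$ of $P$. Since $G$ and $H'$ are both regular in $M$, the digraph $\Gamma$ is simultaneously a Cayley colour digraph on $\Z_{p^k}$ and on $H'$. The base case $k=1$ is immediate, as the only transitive $p$-group of degree $p$ is $\Z_p$. For the inductive step, if $P=G$ then $M=\Z_{p^k}$ and we are done with $m=1$; otherwise the colour-digraph form of the equivalence (1)$\Leftrightarrow$(2)$\Leftrightarrow$(3) of Theorem~\ref{pktwogroups} supplies a proper nontrivial block system $\mathcal B$ — coming from the subgroup chain $G_i$ and its coset condition — with respect to which $\Gamma$ is a wreath product. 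Consequently $M=(M/\mathcal B)\wr(\Stab_M(B)\vert_B)$, and both factors are $2$-closed $p$-groups of strictly smaller prime-power degree, each again containing a regular cyclic subgroup and a regular abelian subgroup (namely the images $G/\mathcal B,\,H'/\mathcal B$ and the block restrictions $\Stab_G(B)\vert_B,\,\Stab_{H'}(B)\vert_B$, which are regular on the blocks and on $B$ respectively). By the induction hypothesis each factor is a wreath product of cyclic $p$-groups, and therefore so is $M$.

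I expect the inductive step to be the main obstacle, for two reasons. First, Theorem~\ref{pktwogroups} is stated for ordinary digraphs, so I must check that its decomposition transfers to the orbital colour digraph $\Gamma$ and that it yields the permutation-group factorisation $M=(M/\mathcal B)\wr(\Stab_M(B)\vert_B)$ with both factors $2$-closed; the delicate part is ruling out the degenerate mergers of adjacent complete and empty factors that are the subject of \cite[Theorem 5.7]{DobsonM2009} and Lemma~\ref{wreathdigraphauto}, which is where the hypothesis that $M$ is a $p$-group is used. Second, the presence of the \emph{second} regular abelian subgroup $H'$ is indispensable and must genuinely drive the existence of the block system $\mathcal B$: dropping it, the automorphism group of the $p^k$-cycle (for example the dihedral group of the $8$-cycle when $p=2$, $k=3$) is a $2$-closed, $k$-step imprimitive $p$-group with a regular cyclic subgroup that is \emph{not} a wreath product of cyclic groups, so no argument using $G$ alone can succeed.
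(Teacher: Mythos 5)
Your first two steps are sound, and the use of Theorem~\ref{maingroup} is even an improvement on the paper in one respect: it places $H'$ inside $\la G,H\ra$ itself, whereas the paper's own proof only produces $H'$ inside $K\le(\la G,H\ra)^{(2)}$. Your reduction to a $2$-closed $p$-group $M=(\la G,H'\ra)^{(2)}$ is also correct. The genuine gap is the inductive step, which is not a deferred routine verification but the actual crux, and as set up it fails in two concrete ways. First, the block system $\mathcal B$ you extract from Theorem~\ref{pktwogroups}(2) consists of orbits of a subgroup $G_1$ of $G$; these are blocks of $G$, but nothing shows they are blocks of $M=\Aut(\Gamma)$, and without that you cannot even embed $M$ into $(M/\mathcal B)\wr(\Stab_M(B)\vert_B)$, let alone prove equality. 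Knowing that the colour digraph is a wreath product over a partition does not make that partition $\Aut$-invariant; this is precisely the degenerate behaviour governed by \cite[Theorem 5.7]{DobsonM2009}, and asserting that ``the $p$-group hypothesis is used here'' is not an argument. Second, even granting the factorisation $M=(M/\mathcal B)\wr(\Stab_M(B)\vert_B)$, your induction hypothesis does not apply to the factors. The bottom factor is the $2$-closure of $\Stab_P(B)\vert_B$ (where $P=\la G,H'\ra$), and this group may properly contain $\la\Stab_G(B)\vert_B,\Stab_{H'}(B)\vert_B\ra$; so it is not of the form ``$2$-closure of a group generated by a regular cyclic and a regular abelian subgroup.'' The weaker property it does have --- being a $2$-closed $p$-group containing a regular cyclic and a regular abelian subgroup --- is not sufficient: your own example $D_{16}=\Aut$ of the undirected $8$-cycle is such a group (with both regular subgroups cyclic) that is not a multiwreath product of cyclic groups, and one level up, $D_{16}\wr\Z_2$, the automorphism group of the wreath product of the undirected $8$-cycle with $K_2$, is a $2$-closed $2$-group of order $2^{12}$ containing regular copies of both $\Z_{16}$ and $\Z_8\times\Z_2$, while every multiwreath product $\Z_{2^{k_1}}\wr\ldots\wr\Z_{2^{k_m}}$ of degree $16$ has order in $\{2^4,2^7,2^{10},2^{11},2^{13},2^{14},2^{15}\}$, never $2^{12}$. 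So any induction must carry the exact hypothesis ``equals $(\la\text{regular cyclic},\text{regular abelian}\ra)^{(2)}$'' through the decomposition, and that is exactly what breaks.

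For contrast, the paper never attempts to decompose a $2$-closed group from above. It applies the colour-digraph version of Theorem~\ref{pktwogroups}(3) to get a multiwreath product $K=\Z_{p^{k_1'}}\wr\ldots\wr\Z_{p^{k_{m'}'}}$ sitting \emph{below} $\Aut(\Gamma)$, notes by Lemma~\ref{wreathdigraphauto} that $K$ is $2$-closed, chooses a \emph{new} regular copy $H'\le K$ (this re-choice, permitted by the existential form of the statement, is essential), and applies the theorem a second time to $\la G,H'\ra$ to get $K'\le(\la G,H'\ra)^{(2)}$. Two uses of $2$-closedness then sandwich everything: $G,H'\le K$ gives $(\la G,H'\ra)^{(2)}\le K$, hence $K'\le K$ and $H'\le K'$, and then $G,H'\le K'$ gives $(\la G,H'\ra)^{(2)}\le K'$, forcing $(\la G,H'\ra)^{(2)}=K'$. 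If you want to salvage your approach, the repair is to import this trick: after your valid step producing $H'$ via Theorem~\ref{maingroup}, replace $H'$ once more inside the $2$-closed multiwreath product supplied by Theorem~\ref{pktwogroups}(3) and Lemma~\ref{wreathdigraphauto}, rather than trying to prove that the $2$-closure you already have is itself a multiwreath product.
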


\begin{proof}
Let $\Gamma$ be the Cayley colour digraph formed by assigning a unique colour to the edges of each orbital digraph of $\la G,H\ra$. Since $\Aut(\Gamma)$ has regular subgroups isomorphic to $G$ and to $H$, we see that $\Gamma$ is a Cayley colour digraph on both of these groups, similar to condition (1) of Theorem~\ref{pktwogroups}.

Although the theorem is not stated for colour digraphs, most of the proof involves only permutation groups, so it is not hard to see that the same result is true for colour digraphs.  By Theorem \ref{pktwogroups} (3), we can find $k_1',\dotsc, k_{m'}'$ such that $\Gamma\cong U_1\wr\dotsb \wr U_{m'}$ where each $U_i$ is a circulant colour digraph on $\Z_{p^{k_i'}}$, and $\Z_{p^{k_1'}}\times\dotsm\times\Z_{p^{k_{m'}'}}\preceq_p H$.
It is a standard and straightforward observation that $(\la G, H\ra)^{(2)}= \Aut(\Gamma) \ge \Aut(U_1)\wr \dotsb \wr \Aut(U_{m'}) \ge \Z_{p^{k_1'}}\wr \dotsb \wr\Z_{p^{k_{m'}'}}$. Call this last group $K$. By Lemma~\ref{wreathdigraphauto}, $K$ is $2$-closed.

Since $\Z_{p^{k_1'}}\times\dotsm\times\Z_{p^{k_{m'}'}}\preceq_p H$, the group $K$ contains a regular subgroup $H' \cong H$. Repeating the argument to this point, with $H'$ taking the role of $H$, we obtain $k_1, \dotsc, k_{m}$ with $K'=\Z_{p^{k_1}}\wr \dotsb \wr\Z_{p^{k_{m}}} \le (\la G, H'\ra)^{(2)}$ and $\Z_{p^{k_1}}\times\dotsm\times\Z_{p^{k_{m}}}\preceq_p H'$. Furthermore, since $G, H' \le K$ and $K$ is $2$-closed, we have $(\la G,H' \ra)^{(2)} \le K$. Thus $K' \le K$, which implies $\Z_{p^{k_1}}\times\dotsm\times\Z_{p^{k_{m}}}\preceq_p \Z_{p^{k_1'}}\times\dotsm\times\Z_{p^{k_{m'}'}}$. This in turn means $H' \le K'$. Now we have $G, H' \le K'$, and $K'$ is also $2$-closed by Lemma~\ref{wreathdigraphauto}, so $(\la G,H' \ra)^{(2)} \le K'$.
Hence $K'=(\la G, H'\ra)^{(2)}$.
\end{proof}

Given regular groups $G_1$ and $H_1$ of degree $a$ and $G_2$ and $H_2$ of degree $b$, there is an obvious method for constructing digraphs that are simultaneously Cayley digraphs of $G_1\times G_2$ and of $H_1\times H_2$.  Namely, construct a Cayley digraph $\Gamma_1$ of order $a$ that is a Cayley digraph of $G_1$ and $H_1$ and a digraph $\Gamma_2$ of order $b$ that is a Cayley digraph of $G_2$ and $H_2$, and then consider some sort of ``product construction" of $\Gamma_1$ and $\Gamma_2$ to produce a digraph $\Gamma$ of order $ab$ with $\Aut(\Gamma_1)\times\Aut(\Gamma_2)\le\Aut(\Gamma)$.  We write ``product construction" as there are two obvious products of $\Gamma_1$ and $\Gamma_2$ that ensure that $\Aut(\Gamma_1)\times\Aut(\Gamma_2)\le\Aut(\Gamma)$: the wreath product, and the Cartesian product.

\begin{defn}
Let $\Gamma_1,\dotsc,\Gamma_r$ be digraphs.  We say that $\Gamma$ is of {\bf product type $\Gamma_1,\dotsc,\Gamma_r$} if $\Aut(\Gamma_1)\times\dotsb\times\Aut(\Gamma_r)\le\Aut(\Gamma)$.
\end{defn}

\begin{thm}\label{main1}
Let $n = p_1^{a_1}\dotsm p_r^{a_r}$.  Let $G,H$ be regular abelian groups of degree $n$ with $G$ cyclic and let $G_{p_i}, H_{p_i}$ be Sylow $p_i$-subgroups of $G$ and $H$ respectively, and $\Gamma$ a Cayley digraph on $G$.
If
\begin{itemize}
\item[] $\Gamma$ is of product type $\Gamma_1,\dotsc,\Gamma_r$, where each $\Gamma_i$ is a Cayley digraph on both $G_{p_i}$ and a group isomorphic to $H_{p_i}$, $1\le i\le r$,
\end{itemize}
then
\begin{itemize}
\item[]
$\Gamma$ is isomorphic to a Cayley digraph of $H$.
\end{itemize}
Furthermore, the converse holds whenever
 $G,H \le N \le \Aut(\Gamma)$ for some nilpotent group $N$, and we can also conclude that each $\Aut(\Gamma_i)$ is a (possibly trivial) multiwreath product of cyclic groups.
\end{thm}

\begin{proof}
First suppose that $\Gamma$ is of product type $\Gamma_1,\dotsc,\Gamma_r$. Then $\Aut(\Gamma_1)\times\dotsm\times\Aut(\Gamma_r)\le\Aut(\Gamma)$. For each $i$, there is some $H'_{p_i} \cong H_{p_i}$ such that $H'_{p_i} \le \Aut(\Gamma_i)$. Thus $H\cong H'_{p_1}\times\dotsm\times H'_{p_r} \le \Aut(\Gamma)$, so $\Gamma$ is isomorphic to a Cayley digraph of $H$.

Conversely, suppose that $\Gamma$ is a Cayley digraph on $G$ that is also isomorphic to a Cayley digraph of the abelian group $H$.  Then $G$ and some $H'\cong H$ are regular subgroups of $\Aut(\Gamma)$.  By assumption, $G,H' \le N \le \Aut(\Gamma)$ for some nilpotent group $N$. Let $N = P_1 \times \dotsm\times P_r$ where $P_i$ is a Sylow $p_i$-subgroup of $N$.
Notice that for each $i$, $\la G_{p_i},H_{p_i}'\ra\le P_i,$ so we may choose $N$ so that $P_i=\langle G_{p_i},H_{p_i}\rangle$ for each $i$. Furthermore, since the $2$-closure of a $p$-group is a $p$-group \cite[Exercise 5.28]{Wielandt1969}, and $N^{(2)} = (P_1)^{(2)}\times\dotsm\times(P_r)^{(2)}$ \cite[Theorem 5.1]{Cameronetal2002} we see that $N^{(2)}$ is a nilpotent group that contains $G$ and $H'$, and since $\Aut(\Gamma)$ is $2$-closed we have $N^{(2)} \le \Aut(\Gamma)$.  We can therefore choose $N=\la G,H'\ra^{(2)} = (P_1)^{(2)}\times\dotsm\times(P_r)^{(2)}$.

Let $B_i$ be one of the orbits of $P_i^{(2)}$. As an orbit of a normal subgroup, $B_i$ is a block of $N$.
By Corollary \ref{wreathcor}, there exists $(H''_{p_i})^{B_i}\le \la G_{p_i}^{B_i},(H'_{p_i})^{B_i}\ra$ such that $(H''_{p_i})^{B_i}\cong (H'_{p_i})^{B_i}$ acts regularly, and the group $(\la (G_{p_i})^{B_i}, (H''_{p_i})^{B_i} \ra)^{(2)}$ is a multiwreath product of cyclic $p_i$-groups. 
By Lemma \ref{wreathdigraphauto} there exists a vertex-transitive digraph $\Gamma_i$ with $\Aut(\Gamma_i) = (\la G_{p_i}^{B_i}, (H''_{p_i})^{B_i} \ra)^{(2)}$. Now, $(\la G_{p_i}^{B_i}, (H''_{p_i})^{B_i} \ra)^{(2)} \le (P_i^{B_i})^{(2)}=P_i^{B_i}$ by Wielandt's Dissection Theorem  \cite[Theorem 6.5]{Wielandt1969}, so $\Aut(\Gamma_i) \le P_i^{B_i}$.  We claim that $\Aut(\Gamma_1)\times\dotsm\times\Aut(\Gamma_r)\le N\le \Aut(\Gamma)$, so that  $\Gamma$ is of product type $\Gamma_1,\dotsc,\Gamma_r$. This is because for any $i$, $N=P_i\times N_i'$, where $p_i \nmid |N_i'$, so we can define $\Aut(\Gamma_i)$ to act semiregularly by commuting with every element of $N_i'$, and in this form, $\Aut(\Gamma_i) \le P_i$ for each $i$.

To complete the proof, notice for each $i$ that since $\Aut(\Gamma_i)$ contains regular subgroups isomorphic to $G_{p_i}^{B_i}$ (which is cyclic) and to $(H''_{p_i})^{B_i} \cong (H'_{p_i})^{B_i}$, the digraph $\Gamma_i$ is a circulant digraph of order $p_i^{a_i}$ that is also a isomorphic to a Cayley digraph of $H_{p_i}$.
\end{proof}

Although the condition requiring a nilpotent group to achieve the converse in the above theorem may seem quite limiting, we observe that in Muzychuk's solution to the isomorphism problem for circulant digraphs \cite{Muzychuk2004}, he shows that when $G\cong H$ are cyclic groups, the isomorphism problem for general $n$ can be reduced to the prime power cases; this implies that $G$ and a conjugate of $H$ always lie in some nilpotent group together.

We point out in the coming corollary that for some values of $n$, the nilpotent group required to achieve the converse of the above theorem will always exist, even if $G$ and $H$ are not both cyclic.

\begin{cor}\label{main-cor}
Let $k = p_1\dotsm p_r$ be such that $\gcd(k,\varphi(k)) = 1$ where each $p_i$ is prime, and $n = p_1^{a_1}\dotsm p_r^{a_r}$.  Let $G,H$ be regular abelian groups of degree $n$ with $G$ cyclic and let $G_{p_i}, H_{p_i}$ be Sylow $p_i$-subgroups of $G$ and $H$ respectively, and $\Gamma$ a Cayley digraph on $G$.  Then $\Gamma$ is isomorphic to a Cayley digraph of $H$ if and only if $\Gamma$ is of product type $\Gamma_1,\dotsc,\Gamma_r$, where each $\Gamma_i$ is a Cayley digraph on both $G_{p_i}$ and a group isomorphic to $H_{p_i}$, $1\le i\le r$.
\end{cor}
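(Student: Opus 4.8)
The plan is to deduce this corollary directly by combining Theorem~\ref{main1} with Theorem~\ref{maingroup}. The essential point is that Theorem~\ref{main1} already establishes the equivalence \emph{conditional on} the existence of a nilpotent group $N$ with $G,H\le N\le\Aut(\Gamma)$, and Theorem~\ref{maingroup} guarantees that such a group exists precisely when $\gcd(k,\varphi(k))=1$. So almost all of the work has been done, and what remains is to chain the two results together correctly through the ``mimics'' terminology.

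The forward implication requires no new argument. If $\Gamma$ is of product type $\Gamma_1,\ldots,\Gamma_r$ with each $\Gamma_i$ a Cayley digraph on both $G_{p_i}$ and a group isomorphic to $H_{p_i}$, then the first half of Theorem~\ref{main1}, which makes no use of the numerical hypothesis, immediately gives that $\Gamma$ is isomorphic to a Cayley digraph of $H$.

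For the converse, suppose $\Gamma$ is a Cayley digraph on the cyclic group $G$ that is also isomorphic to a Cayley digraph of $H$. Then $\Aut(\Gamma)$ contains $G$ as a regular cyclic subgroup and also contains some regular subgroup $H'\cong H$. I would now apply Theorem~\ref{maingroup} with $K=\Aut(\Gamma)$: the hypothesis $\gcd(k,\varphi(k))=1$ is exactly the condition that theorem demands, so there is a nilpotent subgroup $N$ with $G\le N\le\Aut(\Gamma)$ that mimics every regular abelian subgroup of $\Aut(\Gamma)$. Since $H'$ is a regular abelian subgroup of $\Aut(\Gamma)$, the defining property of ``mimics'' supplies a regular abelian subgroup $H''\le N$ with $H''\cong H'\cong H$.

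This places $G$ and $H''$ together inside the single nilpotent group $N\le\Aut(\Gamma)$, with $H''\cong H$, which is precisely the hypothesis under which the converse direction of Theorem~\ref{main1} applies. Invoking that converse (with $H''$ playing the role of the isomorphic copy of $H$, so that $H''_{p_i}\cong H_{p_i}$) yields that $\Gamma$ is of product type $\Gamma_1,\ldots,\Gamma_r$ with each $\Gamma_i$ a Cayley digraph on $G_{p_i}$ and on a group isomorphic to $H_{p_i}$, as required. There is no genuine obstacle here, only one conceptual step: recognizing that the ``mimics'' language of Theorem~\ref{maingroup} was engineered precisely to produce the nilpotent-group hypothesis needed by the converse of Theorem~\ref{main1}. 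Once that correspondence is seen, the remainder is routine assembly of the two theorems.
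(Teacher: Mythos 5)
Your proof is correct and follows essentially the same route as the paper: the forward direction is the unconditional half of Theorem~\ref{main1}, and the converse applies Theorem~\ref{maingroup} to $\Aut(\Gamma)$ to obtain a nilpotent $N$ containing $G$ and (via the ``mimics'' property) a regular copy $H''\cong H$, after which the conditional converse of Theorem~\ref{main1} finishes the argument. Your write-up is in fact slightly more explicit than the paper's, which compresses the invocation of the mimics property into the single phrase ``there exists a nilpotent subgroup $N\le\Aut(\Gamma)$ that contains $G$ and a regular subgroup $H'$ isomorphic to $H$.''
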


\begin{proof}
The first implication is already proven in Theorem~\ref{main1}.

Conversely, suppose that $\Gamma$ is a Cayley digraph on $G$ that is also a Cayley digraph of the abelian group $H$.  Then $G$ and some $H'\cong H$ are regular subgroups of $\Aut(\Gamma)$.  By Theorem \ref{maingroup}, there exists a nilpotent subgroup $N\le\Aut(\Gamma)$ that contains $G$ and a regular subgroup $H'$ isomorphic to $H$. Replacing $H$ by $H'$ and applying Theorem~\ref{main1} yields the result. \end{proof}

\begin{defn}
Let $G$ and $H$ be abelian groups of order $n$, and for each prime $p_i\vert n$, denote a Sylow $p_i$-subgroup of $G$ or $H$ by $G_{p_i}$ or $H_{p_i}$, respectively.  Define a partial order $\preceq$ on the set of all abelian groups of order $n$ by $G\preceq H$ if and only if $G_{p_i}\preceq_{p_i} H_{p_i}$ for every prime divisor $p_i\vert n$.
\end{defn}

\begin{rem}\label{crucialremark}
If, in Theorem~\ref{main1}, $H$ is chosen to be minimal with respect to $\preceq$, then the rank of each Sylow $p_i$-subgroup of $H$ (i.e. the number of elements in any irredundant generating set) will be equal to the number of factors in the multiwreath product $\Aut(\Gamma_i)$. Moreover, $\Aut(\Gamma_1)\times\dotsm\times\Aut(\Gamma_r)$ will contain a regular subgroup isomorphic to the abelian group $R$ if and only if $\Aut(\Gamma)$ contains a regular subgroup isomorphic to the abelian group $R$.
\end{rem}

\begin{thm}\label{main}
Let $\Gamma = \Cay(G,S)$ for some abelian group $G$ of order $n$. Let $R,R'$ be regular abelian subgroups of $\Aut(\Gamma)$, with $R$ cyclic. Suppose that there is a nilpotent group $N$ with $R' \le N \le \Aut(\Gamma)$, such that $N$ mimics every regular abelian subgroup of $\Aut(\Gamma)$.  Then the following are equivalent:
\begin{enumerate}
\item The digraph $\Gamma$ is isomorphic to a Cayley digraph on both $R$ and $H$, where
$H$ is a regular abelian group; furthermore, if $H$ is chosen to be minimal with respect to the partial order $\preceq$ from amongst all regular abelian subgroups of $\Aut(\Gamma)$, then for each $i$, the Sylow $p_i$-subgroup of $H$ has rank $m_i$.
\item Let $P_i$ be a Sylow $p_i$-subgroup of $G$.  There exist a chain of subgroups $P_{i,1}\le\dotsb\le P_{i,m_i-1}$ in $P_i$ such that
\begin{enumerate}
\item $P_{i,1},P_{i,2}/P_{i,1},\dotsc,P_{i,m-1}/P_{i,m-2},P_i/P_{i,m_i-1}$ are cyclic $p_i$-groups;
\item $P_{i,1}\times P_{i,2}/P_{i,1}\times\dotsm\times, P_{i,m-1}/P_{i,m-2},P_i/P_{i,m_i-1}\preceq_{p} H_i$, where $H_i$ is a Sylow $p_i$-subgroup of $H$;
\item For all $s\in S\backslash (P_{i,j}\times G_i')$, we have $sP_{i,j}\subseteq S$, for $j = 1,\dotsc,m_i - 1$, where $G_i'$ is a Hall $p_i'$-subgroup of $G$ (of order $n/p_i^{a_i}$). That is, $S\backslash (P_{i,j}\times G_i')$ is a union of cosets of $P_{i,j}$.
\end{enumerate}
\item The digraph $\Gamma$ is of product type $\Gamma_1,\dotsc,\Gamma_r$, where each $\Gamma_i\cong U_{i,m_i}\wr\dotsb\wr U_{i,1}$ for some Cayley digraphs $U_{i,1},\dotsc,U_{i,m_i}$ on cyclic $p_i$-groups $K_{i,1},\dotsc,K_{i,m_i}$ such
that $K_{i,1}\times\dotsm\times K_{i,m_i} \preceq_{p} H_i$, where $H_i$ is a Sylow $p_i$-subgroup of $H$.
\end{enumerate}
Furthermore, any of these implies:
\begin{enumerate}
\item[4.]$\Gamma$ is isomorphic to Cayley digraphs on every abelian group of order $n$ that
is greater than $H$ in the partial order $\preceq$.
\end{enumerate}
\end{thm}

\begin{proof}
Throughout this proof, let $n=p_1^{a_1}\dotsm p_r^{a_r}$, where the $p_i$ are distinct primes.

(1)$\Rightarrow$(2):  By hypothesis there is a transitive nilpotent subgroup $N$ of $\Aut(\Gamma)$ that contains regular subgroups isomorphic to $G, R$, and $H$.  
By Theorem~\ref{main1}, there exist $\Gamma_1,\dotsc,\Gamma_r$ where each $\Gamma_i$ is a circulant digraph of order $p_i^{a_i}$, such that $\Gamma$ is of product type $\Gamma_1,\dotsc,\Gamma_r$, and each $\Aut(\Gamma_i)$ is a (possibly trivial) multiwreath product of cyclic groups.  Additionally, by Remark \ref{crucialremark}, we may assume that $R$ is a regular abelian subgroup of $\Aut(\Gamma)$ if and only if $\Aut(\Gamma_1)\times\dotsm\times\Aut(\Gamma_r)$ contains a regular abelian subgroup isomorphic to $R$.

Let $\Aut(\Gamma_i) = \Z_{p_i^{b_{m_i,i}}}\wr\dotsb\wr \Z_{p_i^{b_{1,i}}}$ (by Remark~\ref{crucialremark}, this multiwreath product does have $m_i$ factors). Then $\Aut(\Gamma_i)$ admits block systems $\mathcal{D}_{i,j}$, $1\le j\le m_i$ consisting of blocks of size $x_{i,j} = \Pi_{\ell = 1}^jp_i^{b_{\ell,i}}$.  Observe that $\Aut(\Gamma_i)/\mathcal{D}_{i,j} = \Z_{p_i^{b_{m_i,i}}}\wr\dotsb\wr \Z_{p_i^{b_{j+1,i}}}$ and $\fix_{\Aut(\Gamma_i)}(\mathcal{D}_{i,j})^D = \Z_{p_i^{b_{j,i}}}\wr\dotsb\wr \Z_{p_i^{b_{1,i}}}$, $D\in\mathcal{D}_{i,j}$.  Additionally, $\fix_{\Aut(\Gamma_i)}(\mathcal{D}_{i,j+1})/\mathcal{D}_{i,j}$ in its action on $D/\mathcal{D}_{i,j}\in\mathcal{D}_{i,j+1}/\mathcal{D}_{i,j}$ is cyclic of order $p_i^{b_{j+1,i}}$.  Let $P_i$ be a Sylow $p_i$-subgroup of $G$. Now, as $G$ is a transitive abelian group, $P_{i,j}=\fix_G(\mathcal{D}_{i,j})$ is semiregular and transitive on $D\in\mathcal{D}_{i,j}$.  As $\fix_{\Aut(\Gamma_i)}(\mathcal{D}_{i,j+1})/\mathcal{D}_{i,j}$ in its action on $D/\mathcal{D}_{i,j}\in\mathcal{D}_{i,j+1}/\mathcal{D}_{i,j}$ is cyclic of order $p_i^{b_{j+1,i}}$, we see that $P_{i,j+1}/P_{i,j}$ is cyclic of prime-power order, and for the same reason,
$$P_{i,1}\times P_{i,2}/P_{i,1}\times\dotsm\times P_{i,m-1}/P_{i,m-2}\times P_i/P_{i,m_i-1}\preceq_{p} H_i,$$ where $H_i$ is a Sylow $p_i$-subgroup of $H$.
For $1\le j\le m_i-1$, $N$ admits a block system $\mathcal{C}_{i,j}$ consisting of blocks of size $x_{i,j}\cdot n/p_i^{a_i}$, as well as a block system $\mathcal{B}_{i,j}$ consisting of blocks of size $x_{i,j}$, and of course $\mathcal{B}_{i,j}\le\mathcal{C}_{i,j}$.  
As $\Gamma$ is of product type $\Gamma_1,\dotsc,\Gamma_r$, and each $\Gamma_i$ is a circulant graph, we see that $\Aut(\Gamma_i) \times \Z_{n/p_i^{a_i}} \le \Aut(\Gamma)$; furthermore, $\Aut(\Gamma_i) \ge \Z_{p_i^{a_i}/x_{i,j}}\wr \Z_{x_{i,j}}$. Thus, $P_{i,j}\vert_C \le \Aut(\Gamma)$ for every $C \in \mathcal C_{i,j}$. Thus we see that between blocks $C,C'\in\mathcal{C}_i$, we have either every directed edge from a block of $\mathcal{B}_{i,j}$ contained in $C$ to a block of $\mathcal{B}_{i,j}$ contained in $C'$ or no directed edges. As $\mathcal{C}_i$ is formed by the orbits of $P_{i,j}\times G_i'$ and $\mathcal{B}_{i,j}$ is formed by the orbits of $P_{i,j}$, (2) follows.

$(2)\Rightarrow (3)$
Let $\mathcal{B}_{i,j}$ be the block system of $G$ formed by the orbits of $P_{i,j}$, $0\le j\le m_i$, and $\mathcal{C}_{i,j}$ the block system of $G$ formed by the orbits of $P_{i,j}\times G_i'$.  As for all $s\in S\backslash (P_{i,j}\times G_i')$, we have $sP_{i,j}\subseteq S$, for $j = 1,\dotsc,m_i - 1$, we have $\fix_G(\mathcal{B}_{i,j})\vert_{C_{i,j}}\le\Aut(\Gamma)$ for every $C_{i,j}\in\mathcal{C}_{i,j}$.  Note that $\fix_G(\mathcal{B}_{i,m_i}) = P_i$ and that $\la  \Z_{p_i^{a_i}},\fix_G(\mathcal{B}_{i,j})\vert_{C_{i,j}}:C_{i,j}\in\mathcal{C}_{i,j}\ra$ in its action on $B_{i,m_i}\in\mathcal{B}_{i,m_i}$ is $\Z_{p_i^{a_i - \ell_{i,j}}}\wr\Z_{p_i^{\ell_{i,j}}}$ where the orbits of $P_{i,j}$ have order $p_i^{\ell_{i,j}}$.  Now let $n_{i,j+1} = \ell_{i,j+1}-\ell_{i,j}$, $0\le j\le m_i - 1$.  Then $Q_i = \la\fix_G(\mathcal{B}_{i,j})\vert_{C_{i,j}}:C_{i,j}\in\mathcal{C}_{i,j}, 1\le j\le m_i \ra$ in its action on $B_{i,m_i}\in\mathcal{B}_{i,m_i}$ is $\Z_{p_i^{n_{i,m_i}}}\wr \Z_{p_i^{n_{i,m_i-1}}}\wr\dotsb\wr\Z_{p_i^{n_{i,1}}}$, and $Q_i = (\Z_{p_i^{n_{i,m_i}}}\wr \Z_{p_i^{n_{i,m_i-1}}}\wr\dotsb\wr\Z_{p_i^{n_{i,1}}})\times 1_{\Sym(n/p_i^{a_i})}.$
Let $K_{i,j}=\Z_{p_i^{n_{i,j}}}$ for each $1 \le j \le m_i$. Clearly, these are cyclic $p_i$-groups. Also, $P_{i,j+1}/P_{i,j} \cong K_{i,j+1}$ as abstract groups for $0 \le j \le m_i-1$, so assumption (b) tells us that $K_{i,1} \times \dotsm \times K_{i,m_i} \preceq_p H_i$.
Notice that $Q_1\times Q_2\times\dotsm \times Q_r\le\Aut(\Gamma)$, and by Lemma \ref{wreathdigraphauto} there exists a digraph $\Gamma_i$ with $\Aut(\Gamma_i) = Q_i$. The graphs $U_{i,j}$ are given in the proof of Lemma~\ref{wreathdigraphauto}. Thus $\Gamma$ is of product type $\Gamma_1, \dotsc, \Gamma_r$, completing this part of the proof.

$(3)\Rightarrow (1),(4)$ For each $i$, $\Aut(\Gamma_i)$ is a multiwreath product of $m_i$ Cayley digraphs on cyclic groups. It is straightforward to verify (or is an immediate consequence of the Universal Embedding Theorem) that $\Z_a\wr\Z_b$ contains regular subgroups isomorphic to both $\Z_{ab}$ and $\Z_a\times\Z_b$.

By assumption, $\Aut(\Gamma_i)\ge \Z_{p_i^{n_{i,m_i}}}\wr \dotsb \wr \Z_{p_i^{n_{i,1}}}$. Hence the above facts
tell us that $H_i=\Z_{p_i^{n_{i,1}}}\times \dotsm \times \Z_{p_i^{n_{m_i}}} \le \Aut(\Gamma_i)$, and also
that $H'_i \le \Aut(\Gamma_i)$ for every $H'_i \succ_p H_i$. Therefore $H=H_1 \times \dotsm \times H_r
\le \Aut(\Gamma)$, and also $H'\le \Aut(\Gamma)$ for every $H' \succ H$.
\end{proof}



The following result is obtained from the previous result by applying Theorem \ref{maingroup}.

\begin{cor}\label{mainresult}
Let $k = p_1\dotsm p_r$ be such that $\gcd(k,\varphi(k)) = 1$ where each $p_i$ is prime, and $n = p_1^{a_1}\dotsm p_r^{a_r}$. Let $\Gamma = \Cay(G,S)$ for some abelian group $G$ of order $n$. Then the following are equivalent:
\begin{enumerate}
\item The digraph $\Gamma$ is isomorphic to a Cayley digraph on both $\Z_n$ and $H$, where
$H$ is a regular abelian group; furthermore, if $H$ is chosen to be minimal with respect to the partial order $\preceq$ from amongst all regular abelian subgroups of $\Aut(\Gamma)$, then for each $i$, the Sylow $p_i$-subgroup of $H$ has rank $m_i$.
\item Let $P_i$ be a Sylow $p_i$-subgroup of $G$.  There exist a chain of subgroups $P_{i,1}\le\dotsb\le P_{i,m_i-1}$ in $P_i$ such that
\begin{enumerate}
\item $P_{i,1},P_{i,2}/P_{i,1},\dotsc,P_{i,m-1}/P_{i,m-2},P_i/P_{i,m_i-1}$ are cyclic $p_i$-groups;
\item $P_{i,1}\times P_{i,2}/P_{i,1}\times\dotsm\times P_{i,m-1}/P_{i,m-2} \times P_i/P_{i,m_i-1}\preceq_{p} H_i$, where $H_i$ is a Sylow $p_i$-subgroup of $H$;
\item For all $s\in S\backslash (P_{i,j}\times G_i')$, we have $sP_{i,j}\subseteq S$, for $j = 1,\dotsc,m_i - 1$, where $G_i'$ is a Hall $p_i'$-subgroup of $G$ (of order $n/p_i^{a_i}$). That is, $S\backslash (P_{i,j}\times G_i')$ is a union of cosets of $P_{i,j}$.
\end{enumerate}
\item The digraph $\Gamma$ is of product type $\Gamma_1,\dotsc,\Gamma_r$, where each $\Gamma_i\cong U_{i,m_i}\wr\dotsb\wr U_{i,1}$ for some Cayley digraphs $U_{i,1},\dotsc,U_{i,m_i}$ on cyclic $p_i$-groups $K_{i,1},\dotsc,K_{i,m_i}$ such
that $K_{i,1}\times\dotsm\times K_{i,m_i} \preceq_{p} H_i$, where $H_i$ is a Sylow $p_i$-subgroup of $H$.
\end{enumerate}
Furthermore, any of these implies:
\begin{enumerate}
\item[4.] $\Gamma$ is isomorphic to Cayley digraphs on every abelian group of order $n$ that
is greater than $H$ in the partial order $\preceq$.
\end{enumerate}
\end{cor}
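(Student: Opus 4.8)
The plan is to derive this corollary from Theorem~\ref{main} by specializing the cyclic regular group $R$ there to $\Z_n$, using Theorem~\ref{maingroup} to manufacture the nilpotent subgroup that Theorem~\ref{main} carries as a hypothesis. Conditions (1)--(3) and the implication to (4) are verbatim those of Theorem~\ref{main} (with $\Z_n$ in the role of $R$), so the whole task is to show that the numerical hypothesis $\gcd(k,\varphi(k))=1$ lets us invoke Theorem~\ref{main}.

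I would organize the argument as the cycle $(1)\Rightarrow(2)\Rightarrow(3)\Rightarrow(1)$ together with $(3)\Rightarrow(4)$. A glance at the proof of Theorem~\ref{main} shows that the directions $(2)\Rightarrow(3)$ and $(3)\Rightarrow(1),(4)$ never call on the nilpotent subgroup, so they transfer here unchanged; in particular $(3)\Rightarrow(1)$ already delivers $\Z_n\le\Aut(\Gamma)$, since $\Z_n$ is the top element of the partial order $\preceq$. The only direction that genuinely uses the nilpotent subgroup is $(1)\Rightarrow(2)$ (it passes through Theorem~\ref{main1}). For that step I would argue: assuming (1), the group $\Z_n$ is a regular cyclic subgroup of $\Aut(\Gamma)$, so, as $\gcd(k,\varphi(k))=1$, Theorem~\ref{maingroup} applied with $K=\Aut(\Gamma)$ and cyclic group $\Z_n$ produces a nilpotent $N$ with $\Z_n\le N\le\Aut(\Gamma)$ that mimics every regular abelian subgroup of $\Aut(\Gamma)$. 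Since $N$ mimics every such subgroup and $H\le\Aut(\Gamma)$ is regular abelian, $N$ contains a regular $H'\cong H$; thus Theorem~\ref{main} applies with $R=\Z_n$, $R'=H'$, and this $N$, yielding $(1)\Rightarrow(2)$.

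Assembling the cycle then gives the equivalence of (1)--(3), and $(3)\Rightarrow(4)$ gives the final implication. The main obstacle is bookkeeping rather than new mathematics: Theorem~\ref{main} is stated with the nilpotent group as a blanket assumption, whereas I must supply that group from the numerical condition, and the group is only truly needed in the single step $(1)\Rightarrow(2)$ --- precisely the step where condition (1) hands me the regular $\Z_n$ needed to run Theorem~\ref{maingroup}. The one point requiring care is to confirm, by inspecting the proof of Theorem~\ref{main}, that the remaining directions really are free of the nilpotent hypothesis, so that they may be quoted unconditionally here.
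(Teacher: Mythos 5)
Your proposal is correct and matches the paper's own (very terse) proof, which simply obtains the corollary from Theorem~\ref{main} by applying Theorem~\ref{maingroup}; your instantiation with $R=\Z_n$, $R'=H'\le N$, and $N$ the mimicking nilpotent group is exactly the intended argument. The additional bookkeeping you supply — checking that only $(1)\Rightarrow(2)$ needs the nilpotent hypothesis, while $(2)\Rightarrow(3)$ and $(3)\Rightarrow(1),(4)$ go through unconditionally — is a sound and worthwhile elaboration of what the paper leaves implicit.
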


A point about the previous results should be emphasized.  That is, it is necessary to introduce the digraphs $\Gamma_1,\dotsc,\Gamma_r$ from Lemma \ref{wreathdigraphauto} -  one cannot simply define $\Gamma_i = \Gamma[B_i]$ (the induced subgraph on the points of $B_i$), where $B_i\in\mathcal{B}_i$ and $\mathcal B_i$ is a block system whose blocks are the orbits of some Sylow $p$-subgroup of the regular cyclic subgroup.  Rephrased, it is possible for $\Gamma[B_i]$ to be a Cayley digraph of more than one group even when $\Gamma$ is only isomorphic to a Cayley digraph of a cyclic group, and even when the condition $\gcd(k,\varphi(k))=1$ is met.  We give an example of such a digraph in the following result.

\begin{eg}
Let $p$ and $q$ be distinct primes such that $\gcd(pq,\varphi(pq)) = 1$, and $\Gamma = \Cay(\Z_{p^2}\times\Z_{q},S)$, where $S = \{(kp,0),(1,1):k\in\Z_p\}$.  The only abelian group $H$ of order $p^2q$ for which $\Gamma$ is isomorphic to a Cayley digraph of $H$ is the cyclic group of order $p^2q$.  Nonetheless, let $\mathcal{B}$ be the block system of the left regular representation of $\Z_{p^2}\times\Z_{q}$, that has blocks of size $p^2$.  Then for every $B\in\mathcal{B}$, the induced subdigraph $\Gamma[B]$ is a Cayley digraph on $\Z_{p^2}$ and on $\Z_p^2$, and is isomorphic to the wreath product of two circulant digraphs of order $p$.
\end{eg}

\begin{proof}
Towards a contradiction, suppose that $\Gamma$ is isomorphic to a Cayley digraph of the abelian group $H'$, where $H'$ is not cyclic.  Let $G$ be the left regular representation of $\Z_{p^2}\times\Z_{q}$.  Then $H' = \Z_p^2\times\Z_q$, and by Theorem \ref{maingroup} there exists $H \cong H'$ such that $N = \la G,H\ra$ is nilpotent.  Then $N$ admits $\mathcal{B}$ as a block system as well as block systems $\mathcal{B}_p$ and $\mathcal{B}_q$ consisting of blocks of size $p$ and blocks of size $q$, respectively.  As $H\cong\Z_p^2\times\Z_q$, $N/\mathcal{B}_q$ is a $p$-group that contains regular subgroups isomorphic to $\Z_p^2$ and $\Z_{p^2}$.  By \cite[Lemma 4]{DobsonW2002} we have that $N/\mathcal{B}_q\cong\Z_p\wr\Z_p$.

For $0 \le i \le p^2-1$, let $B_{i,q} \in \mathcal B_q$ denote the block that consists of $\{(i,j):j \in \Z_q\}$. Each vertex of $B_{0,q}$ is at the start of a unique directed path in $\Gamma$ (that does not include digons) of length $q$ (travelling by arcs that come from $(1,1) \in S$), and each of these paths ends at a vertex of $B_{q,q}$. Thus any automorphism of $\Gamma$ that fixes $B_{0,q}$ must also fix $B_{q,q}$, contradicting $K/\mathcal{B}_q\cong\Z_p\wr\Z_p$.

Finally, $\Gamma[B]$ is isomorphic to $\Cay(\Z_{p^2},\{kp:k\in\Z_p^*\})\cong\bar{K}_p\wr K_p$, so by Theorem~\ref{pktwogroups}, $\Gamma[B]$ is also a Cayley graph on $\Z_p^2$.
\end{proof}

The converse though is true.  That is, if the condition $\gcd(k,\varphi(k))=1$ is met, and $\Gamma$ is a Cayley digraph of two abelian groups $G$ and $H$ with nonisomorphic Sylow $p$-subgroups $G_p$ and $H_p$, respectively, and $G$ is cyclic, then it must be the case that $\Gamma[B]$ is a wreath product, where $\mathcal{B}$ is formed by the orbits of $G_p$ and $B\in\mathcal{B}$.

\begin{cor}
Let $k = p_1\dotsm p_r$ be such that $\gcd(k,\varphi(k)) = 1$ where each $p_i$ is prime, and $n = p_1^{a_1}\dotsm p_r^{a_r}$.  Let $\Gamma$ be a circulant graph of order $n$, and let $\mathcal{B}_i$ be the block system of the left regular representation of $\Z_n$ consisting of blocks of size $p_i^{a_i}$.  If $\Gamma[B_i]$, $B_i\in\mathcal{B}_i$, is not a nontrivial wreath product and $H$ is an abelian group of order $n$ such that $\Gamma$ is isomorphic to a Cayley digraph of $H$, then a Sylow $p_i$-subgroup of $H$ is cyclic.  Consequently, if $\Gamma[B_i]$, $B_i\in\mathcal{B}_i$, is not isomorphic to a nontrivial wreath product for any $1\le i\le r$ then $\Gamma$ is not isomorphic to a Cayley digraph of any noncyclic abelian group.
\end{cor}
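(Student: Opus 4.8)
The plan is to prove the contrapositive of the first assertion: fix $i$, assume that a Sylow $p_i$-subgroup $H_{p_i}$ of $H$ is \emph{noncyclic}, and show that $\Gamma[B_i]$ is a nontrivial wreath product. Write $G=\Z_n$ for the regular cyclic group with $\Gamma=\Cay(G,S)$ and let $G_{p_i}$ be its Sylow $p_i$-subgroup, so that the blocks of $\mathcal{B}_i$ are precisely the cosets of $G_{p_i}$ and $G_{p_i}$ acts regularly on each block $B_i$. Since $\Gamma$ is isomorphic to a Cayley digraph of $H$, the group $\Aut(\Gamma)$ contains a regular subgroup isomorphic to $H$. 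Because $\gcd(k,\varphi(k))=1$, Theorem~\ref{maingroup} supplies a nilpotent group $N$ with $G\le N\le \Aut(\Gamma)$ that mimics every regular abelian subgroup of $\Aut(\Gamma)$; in particular $N$ contains a regular subgroup $H'\cong H$. First I would decompose $N=P_1\times\cdots\times P_r$ into its Sylow subgroups and observe that both $G_{p_i}$ and the Sylow $p_i$-subgroup $H'_{p_i}$ of $H'$ lie in $P_i$, since in a nilpotent group the elements of $p_i$-power order are exactly those in the Sylow $p_i$-subgroup.

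The key step, and the main obstacle, is to show that a single block $B_i$ is \emph{simultaneously} a regular orbit of $G_{p_i}$ and of $H'_{p_i}$; without this alignment the two regular groups need not share the block system $\mathcal{B}_i$, which is exactly why the nilpotent group $N$ is needed. Here I would use that every block system of $N$ is also a block system of $G=\Z_n$ (as $G\le N$), and that $\Z_n$ admits a \emph{unique} block system with blocks of size $p_i^{a_i}$, namely $\mathcal{B}_i$ (the cosets of the unique subgroup $G_{p_i}$ of that order). Since $P_i\tl N$ is a $p_i$-group containing the semiregular group $G_{p_i}$ of order $p_i^{a_i}$, each $P_i$-orbit is a $G$-block of $p_i$-power size that contains a full $G_{p_i}$-orbit, hence has size exactly $p_i^{a_i}$; thus the $P_i$-orbits are precisely $\mathcal{B}_i$. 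Consequently each block $B_i\in\mathcal{B}_i$ is at once a $G_{p_i}$-orbit and, because $H'_{p_i}\le P_i$ is semiregular of order $p_i^{a_i}$, an $H'_{p_i}$-orbit. Restricting to such a block, $G_{p_i}\vert_{B_i}\cong\Z_{p_i^{a_i}}$ and $H'_{p_i}\vert_{B_i}\cong H_{p_i}$ both act regularly and preserve $\Gamma[B_i]$, so $\Gamma[B_i]$ is a circulant digraph of order $p_i^{a_i}$ that is also a Cayley digraph on the noncyclic group $H_{p_i}$.

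Finally I would invoke the prime-power case. By the equivalence of (1) and (3) in Theorem~\ref{pktwogroups}, $\Gamma[B_i]\cong U_m\wr\cdots\wr U_1$ for Cayley digraphs on cyclic $p_i$-groups $K_1,\ldots,K_m$ with $K_1\times\cdots\times K_m\preceq_{p_i} H_{p_i}$. Since $\Z_{p_i^{a_i}}$ is maximal in the partial order $\preceq_{p_i}$, the case $m=1$ would force $\Z_{p_i^{a_i}}\preceq_{p_i}H_{p_i}$ and hence $H_{p_i}$ cyclic, contrary to assumption; therefore $m\ge 2$ and $\Gamma[B_i]$ is a nontrivial wreath product, establishing the contrapositive of the first claim. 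For the concluding sentence, suppose no $\Gamma[B_i]$ is a nontrivial wreath product, and let $H$ be any abelian group with $\Gamma\cong\Cay(H,\cdot)$. The first part then forces every $H_{p_i}$ to be cyclic; as the $p_i$ are distinct primes, $H\cong\prod_i H_{p_i}$ is cyclic, so $\Gamma$ is not isomorphic to a Cayley digraph of any noncyclic abelian group.
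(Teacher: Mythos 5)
Your proposal is correct and follows essentially the same route as the paper: invoke Theorem~\ref{maingroup} to obtain a nilpotent $N\le\Aut(\Gamma)$ containing the regular cyclic group and a regular copy of $H$, observe that $\mathcal{B}_i$ is a block system of $N$ so that the Sylow $p_i$-subgroups of both regular groups restrict to regular (and nonisomorphic) groups on each block, and then apply Theorem~\ref{pktwogroups} to force a nontrivial wreath decomposition of $\Gamma[B_i]$. Your argument merely makes explicit two steps the paper leaves terse — deriving the block alignment from the normality of the Sylow factors $P_i\tl N$ and ruling out $m=1$ in condition (3) of Theorem~\ref{pktwogroups} via maximality of $\Z_{p_i^{a_i}}$ in $\preceq_{p_i}$ — both of which are sound.
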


\begin{proof}
By Theorem~\ref{maingroup}, there is a transitive nilpotent subgroup $N$ of $\Aut(\Gamma)$ that contains the left regular representation of $\Z_n$ as well as a regular subgroup isomorphic to $H$. The system $\mathcal B_i$ is a block system of $N$ also, since $N$ is nilpotent. Thus $\mathcal B_i$ is a block system of $H$. Let $H_i$ denote a Sylow $p_i$-subgroup of $H$, and $G_i$ a Sylow $p_i$-subgroup of $\Z_n$. If  $H_i$ is not cyclic, then the restrictions of $H_i$ and $G_i$ to any $B_i \in \mathcal B_i$ are nonisomorphic regular $p$-groups, so by Theorem~\ref{pktwogroups}, $\Gamma[B_i]$ is a nontrivial wreath product.
\end{proof}

\section{Future Work}

The work in this paper provides a ``template" that one can use to approach the problem of when a digraph is a Cayley digraph of two nonisomorphic nilpotent groups, as follows.

Let $R,R'$ be two regular nilpotent groups of order $n$.  If there exists $\delta\in\la R,R'\ra$ such that $\la R,\delta^{-1}R'\delta\ra$ is nilpotent, then $\la R,\delta^{-1}R'\delta\ra^{(2)}$ is also nilpotent, and writing the group $\la R,\delta^{-1}R'\delta\ra^{(2)}$ as $\Pi_{i=1}^rP_i$, where $P_1,\dotsc,P_r$ are all Sylow subgroups of $\la R,\delta^{-1}R'\delta\ra^{(2)}$, then each $P_i$ is $2$-closed.  Furthermore, if $R_{p_i}$ is the Sylow $p_i$-subgroup of $R$, $R'_{p_i}$ is the Sylow $p_i$-subgroup of $R'$, and $P_i$ is the Sylow $p_i$-subgroup of $\la R, \delta^{-1}R'\delta\ra^{(2)}$, then $P_i=\la R_{p_i}, \delta^{-1}R'_{p_i}\delta\ra^{(2)}$. Thus, the subgraph induced on each orbit of $P_i$ is a Cayley graph on the Sylow $p_i$-subgroups of both $R$ and $R'$. So from a group theoretic point of view, this ``reduces" the group theoretic characterization to the corresponding prime-power cases.

Of course, we would ideally like conditions on the connection set of a Cayley digraph of one group to be a Cayley digraph of another group, but at this time such conditions are only known in the prime-power case when one of the groups is cyclic.  We also suspect that the conditions given in Theorem~\ref{main} and Corollary~\ref{mainresult} only hold when one of the groups is a cyclic group, and different conditions will be needed for different choices of nilpotent or even abelian groups.  So we have the following problem:

\begin{prob}
Given $p$-groups $P$ and $P'$, determine necessary and sufficient conditions on $S\subseteq P$ so that $\Cay(P,S)$ is also isomorphic to a Cayley digraph of $P'$.
\end{prob}

Some attempt to study this has been made in \cite{MMV}. The authors do not study the connection sets, but show that the situation is vastly different when neither regular subgroup in the automorphism group is cyclic, in the following sense. Theorem~\ref{pktwogroups} above shows that when one of the regular subgroups is cyclic (say of order $p^k$), the automorphism group of the graph is a multiwreath product, so the regular subgroup has index at least $p^{(k-1)(p-1)}$  in its automorphism group. However, Theorem 1.1 of \cite{MMV} shows that if $k \ge 3$ then given any non-cyclic abelian group of order $p^k$ where $p$ is odd, there is a Cayley digraph on that group whose automorphism group has order just $p^{k+1}$ (so the regular subgroup has index $p$ in this group) that contains a regular nonabelian subgroup also.

So how can one determine if a $\delta$ that conjugates $R'$ to lie in a nilpotent group with $R$ exists?  We have seen in this proof that sometimes in order to prove that they lie together in a nilpotent group, it is sufficient to show that they lie together in a group that is normally $\Omega(n)$-step imprimitive. Naively following the structure of the proof in this paper, we have the following problem:

\begin{prob}\label{prob2}
Determine for which regular nilpotent groups $N$ and $N'$ of order $n$ there exists $\delta\in \la N,N'\ra$ such that $\la N,\delta^{-1}N'\delta\ra$ is (normally) $\Omega(n)$-step imprimitive.
\end{prob}

This condition is certainly a necessary condition for a nilpotent subgroup of $\la N,N'\ra$ to contain $N$ and a conjugate of $N'$, but this condition is also sufficient under the arithmetic condition in Theorem \ref{mainresult} by \cite[Corollary 15]{Dobson2014}.  The solution of Problem \ref{prob2} will likely depend, like the proof of Theorem~\ref{dtcyclic} and consequently Theorem~\ref{maingroup}, on the Classification of the Finite Simple Groups.  In particular, it seems likely we will need at least a list of primitive groups which contain a regular nilpotent subgroup, as well as a perhaps a list of such nilpotent subgroups.  It is worthwhile to point out that Liebeck, Praeger, and Saxl have determined all primitive almost simple groups which contain a regular subgroup \cite[Theorem 1.1]{LiebeckPS2010}.

Finally, we conjecture that the condition $\gcd(k,\varphi(k)) = 1$ in Theorem \ref{mainresult} is unnecessary:

\begin{conj}
Theorem \ref{mainresult} holds for all $n\in {\mathbb N}$.
\end{conj}

\noindent Settling this conjecture may be quite difficult, as, at least with our approach, a positive solution would require generalizing Muzychuk's solution to the isomorphism problem for circulant color digraphs in \cite{Muzychuk2004}, which is a difficult and significant result.


\begin{thebibliography}{10}

\bibitem{Cameronetal2002}
Peter~J. Cameron, Michael Giudici, Gareth~A. Jones, William~M. Kantor,
  Mikhail~H. Klin, Dragan Maru{\v{s}}i{\v{c}}, and Lewis~A. Nowitz,
  \emph{Transitive permutation groups without semiregular subgroups}, J. London
  Math. Soc. (2) \textbf{66} (2002), no.~2, 325--333. \MR{MR1920405
  (2003f:20001)}

\bibitem{DixonM1996}
John~D. Dixon and Brian Mortimer, \emph{Permutation groups}, Graduate Texts in
  Mathematics, vol. 163, Springer-Verlag, New York, 1996. \MR{MR1409812
  (98m:20003)}

\bibitem{Dobson2000a}
Edward Dobson, \emph{On solvable groups and circulant graphs}, European J.
  Combin. \textbf{21} (2000), no.~7, 881--885. \MR{MR1787902 (2001j:05065)}

\bibitem{Dobson2003a}
\bysame, \emph{On isomorphisms of abelian {C}ayley objects of certain orders},
  Discrete Math. \textbf{266} (2003), no.~1-3, 203--215, The 18th British
  Combinatorial Conference (Brighton, 2001). \MR{MR1991717 (2004m:20006)}

\bibitem{Dobson2003}
\bysame, \emph{On the {C}ayley isomorphism problem for ternary relational
  structures}, J. Combin. Theory Ser. A \textbf{101} (2003), no.~2, 225--248.
  \MR{MR1961544 (2004c:05088)}

\bibitem{Dobson2006a}
\bysame, \emph{Automorphism groups of metacirculant graphs of order a product
  of two distinct primes}, Combin. Probab. Comput. \textbf{15} (2006), no.~1-2,
  105--130. \MR{MR2195578 (2006m:05108)}

\bibitem{Dobson2008}
\bysame, \emph{On solvable groups and {C}ayley graphs}, J. Combin. Theory Ser.
  B \textbf{98} (2008), no.~6, 1193--1214. \MR{MR2462314}

\bibitem{Dobson2009}
\bysame, \emph{On overgroups of regular abelian {$p$}-groups}, Ars Math.
  Contemp. \textbf{2} (2009), no.~1, 59--76. \MR{MR2491632}

\bibitem{Dobson2014}
\bysame, \emph{On the {C}ayley isomorphism problem for {C}ayley objects of
  nilpotent groups of some orders}, Electron. J. Combin. \textbf{21} (2014),
  no.~3, Paper 3.8, 15. \MR{3262245}

\bibitem{DobsonM2011}
Edward Dobson and Dragan Maru\v{s}i\v{c}, \emph{On semiregular elements of
  solvable groups}, Comm. Algebra \textbf{39} (2011), no.~4, 1413--1426.
  \MR{2804682}

\bibitem{DobsonM2009}
Edward Dobson and Joy Morris, \emph{Automorphism groups of wreath product
  digraphs}, Electron. J. Combin. \textbf{16} (2009), no.~1, Research Paper 17,
  30. \MR{MR2475540}

\bibitem{DobsonW2002}
Edward Dobson and Dave Witte, \emph{Transitive permutation groups of
  prime-squared degree}, J. Algebraic Combin. \textbf{16} (2002), no.~1,
  43--69. \MR{MR1941984 (2004c:20007)}

\bibitem{DobsonS2017}
Ted Dobson and Pablo Spiga, \emph{Cayley numbers with arbitrarily many distinct
  prime factors}, J. Combin. Theory Ser. B \textbf{122} (2017), 301--310.
  \MR{3575206}

\bibitem{Gorenstein1968}
Daniel Gorenstein, \emph{Finite groups}, Harper \& Row Publishers, New York,
  1968. \MR{MR0231903 (38 \#229)}

\bibitem{Jones2002}
Gareth~A. Jones, \emph{Cyclic regular subgroups of primitive permutation
  groups}, J. Group Theory \textbf{5} (2002), no.~4, 403--407. \MR{MR1931365
  (2003h:20004)}

\bibitem{Joseph1995}
Anne Joseph, \emph{The isomorphism problem for {C}ayley digraphs on groups of
  prime-squared order}, Discrete Math. \textbf{141} (1995), no.~1-3, 173--183.
  \MR{1336683 (96e:05071)}

\bibitem{KovacsS2012}
Istv{\'a}n Kov{\'a}cs and Mary Servatius, \emph{On {C}ayley digraphs on
  nonisomorphic 2-groups}, J. Graph Theory \textbf{70} (2012), no.~4, 435--448.
  \MR{2957057}

\bibitem{Li2003}
Cai~Heng Li, \emph{The finite primitive permutation groups containing an
  abelian regular subgroup}, Proc. London Math. Soc. (3) \textbf{87} (2003),
  no.~3, 725--747. \MR{MR2005881 (2004i:20003)}

\bibitem{LiebeckPS2010}
Martin~W. Liebeck, Cheryl~E. Praeger, and Jan Saxl, \emph{Regular subgroups of
  primitive permutation groups}, Mem. Amer. Math. Soc. \textbf{203} (2010),
  no.~952, vi+74. \MR{2588738}

\bibitem{MarusicM2005}
Dragan Maru{\v{s}}i{\v{c}} and Joy Morris, \emph{Normal circulant graphs with
  noncyclic regular subgroups}, J. Graph Theory \textbf{50} (2005), no.~1,
  13--24. \MR{2157535 (2006c:05073)}

\bibitem{Meldrum1995}
J.~D.~P. Meldrum, \emph{Wreath products of groups and semigroups}, Pitman
  Monographs and Surveys in Pure and Applied Mathematics, vol.~74, Longman,
  Harlow, 1995. \MR{MR1379113 (97j:20030)}

\bibitem{MMV}
Luke Morgan, Joy Morris, and Gabriel Verret, \emph{Digraphs with small
  automorphism groups that are {C}ayley on two nonisomorphic groups}, Art
  Discrete Appl. Math. \textbf{3} (2020), no.~1, \#P1.01, 11. \MR{4104211}

\bibitem{Morris1996}
Joy Morris, \emph{Isomorphic {C}ayley graphs on different groups}, Proceedings
  of the {T}wenty-seventh {S}outheastern {I}nternational {C}onference on
  {C}ombinatorics, {G}raph {T}heory and {C}omputing ({B}aton {R}ouge, {LA},
  1996), vol. 121, 1996, pp.~93--96. \MR{MR1431979 (97k:05102)}

\bibitem{Morris1999}
\bysame, \emph{Isomorphic {C}ayley graphs on nonisomorphic groups}, J. Graph
  Theory \textbf{31} (1999), no.~4, 345--362. \MR{MR1698752 (2000e:05085)}

\bibitem{Muzychuk2004}
M.~Muzychuk, \emph{A solution of the isomorphism problem for circulant graphs},
  Proc. London Math. Soc. (3) \textbf{88} (2004), no.~1, 1--41. \MR{MR2018956
  (2004h:05084)}

\bibitem{Muzychuk1999}
Mikhail Muzychuk, \emph{On the isomorphism problem for cyclic combinatorial
  objects}, Discrete Math. \textbf{197/198} (1999), 589--606, 16th British
  Combinatorial Conference (London, 1997). \MR{MR1674890 (2000e:05165)}

\bibitem{Palfy1987}
P.~P. P{\'a}lfy, \emph{Isomorphism problem for relational structures with a
  cyclic automorphism}, European J. Combin. \textbf{8} (1987), no.~1, 35--43.
  \MR{MR884062 (88i:05097)}

\bibitem{Wielandt1969}
H.~Wielandt, \emph{Permutation groups through invariant relations and invariant
  functions}, lectures given at The Ohio State University, Columbus, Ohio,
  1969.

\end{thebibliography}
\end{document}